\newtheorem{theorem}{Theorem}
\numberwithin{theorem}{section}
\newtheorem*{theorem*}{Theorem}
\newtheorem{lemma}[theorem]{Lemma}
\newtheorem*{lemma*}{Lemma}
\newtheorem*{corollary*}{Corollary}
\newtheorem{proposition}[theorem]{Proposition}
\newtheorem*{proposition*}{Proposition}
\theoremstyle{definition}
\newtheorem{definition}[theorem]{Definition}
\newtheorem*{definition*}{Definition}
\newtheorem{remark}[theorem]{Remark}
\newtheorem*{remark*}{Remark}
\newtheorem{question}[theorem]{Question}
\newtheorem*{question*}{Question}
\newcommand{\A}{{\mathcal{A}}}
\newcommand{\C}{{\mathcal{C}}}
\newcommand{\F}{{\mathcal{F}}}
\newcommand{\I}{{\mathcal{I}}}
\newcommand{\M}{{\mathcal{M}}}
\renewcommand{\O}{{\mathcal{O}}}
\renewcommand{\P}{{\mathcal{P}}}
\renewcommand{\S}{{\mathcal{S}}}
\newcommand{\T}{{\mathcal{T}}}
\newcommand{\U}{{\mathcal{U}}}
\newcommand{\W}{{\mathcal{W}}}
\newcommand{\CC}{{\mathbb{C}}}
\newcommand{\PP}{{\mathbb{P}}}
\newcommand{\QQ}{{\mathbb{Q}}}
\renewcommand{\SS}{{\mathbb{S}}}
\newcommand{\TT}{{\mathbb{T}}}
\renewcommand{\a}{{\mathfrak{a}}}
\renewcommand{\b}{{\mathfrak{b}}}
\renewcommand{\c}{{\mathfrak{c}}}
\renewcommand{\d}{{\mathfrak{d}}}
\renewcommand{\i}{{\mathfrak{i}}}
\renewcommand{\u}{{\mathfrak{u}}}
\DeclareMathOperator{\dom}{dom}
\DeclareMathOperator{\ran}{ran}
\DeclareMathOperator{\restr}{\upharpoonright}
\DeclareMathOperator{\concat}{{^\smallfrown}}
\newcommand{\simpleset}[1]{{\{{#1}\}}}
\newcommand{\simpleseq}[1]{{\langle{#1}\rangle}}
\newcommand{\set}[2]{{\{ {#1} \mid {#2} \}}}
\newcommand{\seq}[2]{{\langle {#1} \mid {#2} \rangle}}
\DeclareMathOperator{\completesubposet}{{\mathbin{< \!\!\! \circ}}}
\DeclareMathOperator{\extends}{{\mathbin{\leq}}}
\DeclareMathOperator{\compat}{{\mathbin{||}}}
\DeclareMathOperator{\forces}{{ \, \Vdash \, }}
\DeclareMathOperator{\forcingconcat}{{\mathbin{\ast}}}
\newcommand{\gen}{{\dot{G}}}
\newcommand{\finseq}[1]{{{#1}^{<\omega}}}
\newcommand{\infseq}[1]{{{#1}^{\omega}}}
\newcommand{\finsubset}[1]{{[#1]^{<\omega}}}
\DeclareMathOperator{\finseqomega}{\finseq{\omega}}
\DeclareMathOperator{\infseqomega}{\infseq{\omega}}
\DeclareMathOperator{\cov}{cov}
\DeclareMathOperator{\cof}{cof}
\DeclareMathOperator{\spec}{spec}
\DeclareMathOperator{\aT}{\a_{T}}
\DeclareMathOperator{\suc}{succ}
\DeclareMathOperator{\sucspl}{succspl}
\DeclareMathOperator{\spl}{spl}
\DeclareMathOperator{\stem}{stem}
\newcommand{\finseqomegaarg}[1]{{{\omega}^{<{#1}}}}
\newcommand{\finseqeqomegaarg}[1]{{{\omega}^{{#1}}}}
\newcommand{\finseqleqomegaarg}[1]{{{\omega}^{\leq{#1}}}}
\title{Partitions of the Baire space into compact sets}
\author{V. Fischer}
\address{Institute of Mathematics, University of Vienna, Kolingasse 14-16, 1090 Vienna, Austria}
\email{vera.fischer@univie.ac.at}
\author{L. Schembecker}
\address{Department of Mathematics, University of Hamburg, Bundesstrasse 55, 20146 Hamburg, Germany}
\email{lukas.schembecker@uni-hamburg.de}
\thanks{\emph{Acknowledgments.}: The authors would like to thank the Austrian Science Fund (FWF) for the generous support through START Grant Y1012-N35.}
\subjclass[2000]{03E35, 03E17}
\keywords{cardinal characteristics, forcing indestructibility, compact partitions}
\begin{document}
	\maketitle
	
	\begin{abstract}
		We define a c.c.c.\ forcing which adds a maximal almost disjoint family of finitely splitting trees on $\omega$ (a.d.f.s.\ family) or equivalently a partition of the Baire space into compact sets of desired size.
		Further, we utilize the forcing to add arbitrarily many maximal a.d.f.s.\ families of arbitrary sizes at the same time, so that the spectrum of $\aT$ may be large.
		
		Furthermore, under CH we construct a Sacks-indestructible maximal a.d.f.s.\ family (by countably supported iteration and product of any length), which answers a question of Newelski~\cite{New87}. 
		Also, we present an in-depth ``isomorphism of names"-argument to show that in generic extensions of models of CH by countably supported Sacks forcing there are no maximal a.d.f.s.\ families of size $\kappa$, where $\aleph_1 < \kappa < \c$.
		Thus, we prove that in the generic extension the spectrum of $\aT$ is $\simpleset{\aleph_1, \c}$. Finally, we prove that Shelah's ultrapower model of \cite{shelahtemplate} for the consistency of $\d < \a$ also satisfies $\a = \aT$.
		Thus, consistently $\d < \a = \aT$ may hold.
	\end{abstract}

\section{Introduction}

Given an uncountable Polish space $X$ and a pointclass $\Gamma$ of Borel sets we want to understand what are the possible cardinalities of  partitions of $X$ into sets in $\Gamma$. Here, with a partition we mean a collection of non-empty subsets of $X$ which are pairwise disjoint and union up to the whole space $X$.

We begin with a brief recollection of when partitions of size $\aleph_1$ exist for different pointclasses $\Gamma$. For $\Gamma = F_{\sigma \delta}$, Hausdorff proved in \cite{hausdorff} that every Polish space is the union of $\aleph_1$-many strictly increasing $G_\delta$-sets. Thus, one immediately obtains that every uncountable Polish space can be partitioned into $\aleph_1$-many disjoint $F_{\sigma\delta}$-sets. For $\Gamma = G_\delta$, there is a close connection to $\cov(\M)$, the minimal size of a family of meager sets which covers $^\omega 2$. In fact, in \cite{fremlinshelah} Fremlin and Shelah showed that $\cov(\M) = \aleph_1$ if and only if $^\omega 2$ can be partitioned into $\aleph_1$-many $G_\delta$-sets. Finally, the most interesting case for us will be partitions into closed/compact sets. In \cite{miller} Miller introduced a proper forcing notion, which is $\infseqomega$-bounding (see~\cite{spinas}) and destroys a given partition $\C$ of $^\omega 2$ into closed sets by adding a new real which is not in the closure of any element of $\C$ in the generic extension. Thus, by iterating the forcing $\aleph_2$-many times and using a suitable bookkeeping one obtains a model in which there is no partition of $^\omega 2$ into $\aleph_1$-many closed sets.
We denote with $\aT$ the minimal size of a partition of $^\omega 2$ into closed sets, so with this notation in~\cite{miller} Miller established the relative consistency of $\cov(\M) = \aleph_1 < \aT = \aleph_2$. 
Notably, the forcing also preserves tight mad families (see~ \cite{restricted}), and as more recently shown in~\cite{partitionforcing} selective independent families and P-points. Thus, the same model witnesses the consistency of $\d = \a = \i = \u = \aleph_1 < \aT = \aleph_2$ (see~\cite{partitionforcing}).

Naturally, one might ask if the definition of $\aT$ differs if we would have chosen another uncountable Polish space than $^\omega 2$.  However, by a recent result of Brian~\cite{brian}, some uncountable Polish space can be partitioned into $\kappa$-many closed sets if and only if every uncountable Polish space can be partitioned into $\kappa$-many closed sets. Hence, not only $\aT$ is independent of the choice of the underlying Polish space, but so is also  $\spec(\aT)$, which is the set of all sizes of partitions of $^\omega 2$ into closed sets.
In fact, even more is true~\cite[Theorem 2.4]{brian}: The existence of partitions into $\kappa$-many closed sets is equivalent to the existence of partitions into $\kappa$-many compact or $F_\sigma$-sets. Thus, if we would be only interested in the possible values of $\aT$ and $\spec(\aT)$ it does not matter which uncountable Polish space we partition or whether these partitions are into compact, closed or $F_\sigma$-sets. In addition, Brian established in \cite{brian} the existence of a c.c.c.\ forcing that adds partitions of $^\omega 2$ into $F_\sigma$-sets of desired sizes and only those sizes. Therefore by the aforementioned theorem the poset adjoins implicitly also partitions of $^\omega 2$ into closed or compact sets of these desired sizes.

In contrast, in the current paper we introduce a c.c.c.\ forcing that explicitly adds a partition of $\infseqomega$ into compact sets. Our approach stems from a slightly different standpoint, as $\aT$ can also be defined as the minimal size of a maximal almost disjoint family of finitely splitting trees on $\finseqomega$, since König's Lemma implies that such families can be identified with partitions of $\infseqomega$ into compact sets (for a more detailed discussion see Section~\ref{prelim}). Hence, it will be most natural for us to consider partitions of Baire space into compact sets.

This paper is structured as follows:
In section 2 we will review some common notions and definitions regarding trees.
The third section begins with the introduction of a c.c.c.\ forcing which extends a given almost disjoint family of finitely splitting trees by $\omega$-many new finitely splitting trees (see Definition \ref{forcingdef}), so that the extended family is still almost disjoint (see Lemma \ref{generic_adfs}).
We then prove that the generic new trees satisfy a certain diagonalization property (see Definition \ref{diagdef1} and \ref{diagdef2} and Proposition \ref{diag}), so that iterating the forcing yields a maximal almost disjoint family of finitely splitting trees of desired size $\kappa$ for any $\kappa$ of uncountable cofinality (see Theorem \ref{key}).
We immediately obtain the following consistency result, see Theorem \ref{thm.aT.c.arbitrary}

\begin{theorem*}
	Assume CH and let $\kappa \leq \lambda$ be regular cardinals.
	Then there is c.c.c.\ extension in which  $\d = \aT = \kappa \leq \lambda = \c$ holds.
\end{theorem*}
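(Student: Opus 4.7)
Starting from a model $V \models {\sf CH}$, the plan is to perform a finite-support c.c.c.\ iteration $\PP_\lambda$ of length $\lambda$ whose iterands are the forcings of Definition~\ref{DEF_Forcing}, with a bookkeeping scheme designed so that one designated family $\mathcal{T}$ of size $\kappa$ is fully assembled and forced to be maximal by stage $\kappa$ (this is the content of Theorem~\ref{Theorem_Force_ADFS}) and then remains maximal throughout the tail of the iteration. Because finite support preserves the c.c.c., $\PP_\lambda$ is c.c.c.; standard nice-name counting gives $\c = \lambda$ in $V^{\PP_\lambda}$; and the witness $\mathcal{T}$ immediately yields $\aT \leq \kappa$ there.

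For $\d = \kappa$, I would first use the diagonalization property of Proposition~\ref{PROP_Diagonalization} to argue that the envelope function $f_\alpha(n) := \max \{ s(n) : s \in T_\alpha,\ |s| = n+1 \}$ of each newly added generic tree $T_\alpha$ cannot be dominated by any function in the previous intermediate model $V^{\PP_\alpha}$, since any intruder witnessing failure of such escape would contradict the diagonalization property against $T_\alpha$. Hence the $\kappa$-many envelopes coming from the trees of $\mathcal{T}$ form a dominating family, giving $\d \leq \kappa$. The matching lower bound $\d \geq \kappa$ is a standard cofinality argument: since $\kappa$ has uncountable cofinality and $\PP_\lambda$ is c.c.c., any candidate family of size $<\kappa$ already lives in some $V^{\PP_\alpha}$ with $\alpha < \kappa$, but the generic tree added at stage $\alpha$ contributes an unbounded real over $V^{\PP_\alpha}$. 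The same cofinality trick applied directly to almost-disjoint families of finitely splitting trees (using that the stage-$\alpha$ generic produces a new f.s.\ tree a.d.\ from all trees of $V^{\PP_\alpha}$) rules out any m.a.d.f.s.\ of size $<\kappa$, so $\aT \geq \kappa$ as well, and we conclude $\d = \aT = \kappa \leq \lambda = \c$.

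The main obstacle, and the technical crux of the argument, is the preservation step at stages $\alpha \geq \kappa$: one must ensure that the tail of the iteration does not add any new finitely splitting tree almost disjoint from $\mathcal{T}$. I expect this is handled by arranging the late iterands as Force\_ADFS-type forcings whose base a.d.f.s.\ family is a suitably chosen countable subfamily of $\mathcal{T}$, so that each new generic tree is still forced to meet some element of $\mathcal{T}$ infinitely via the maximality of $\mathcal{T}$ already achieved in $V^{\PP_\kappa}$, together with a density/intruder argument analogous to the one driving Proposition~\ref{PROP_Diagonalization}. Once this preservation is in place, the remainder of the proof reduces to routine cardinal-invariant bookkeeping applied on top of Theorem~\ref{Theorem_Force_ADFS}.
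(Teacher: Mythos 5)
There is a genuine gap, and it sits exactly where you locate the ``technical crux.'' Your architecture --- assemble a maximal a.d.f.s.\ family $\T$ of size $\kappa$ by stage $\kappa$ and then continue a finite-support c.c.c.\ iteration up to $\lambda$ while preserving its maximality --- cannot work. The tail of any nontrivial finite-support iteration adds Cohen reals at limit stages, and a Cohen (indeed, any unbounded) real $f \in \infseqomega$ over $V^{\PP_\kappa}$ lies in no $[T]$ for $T$ a finitely splitting tree of $V^{\PP_\kappa}$, since $[T]$ is compact and hence bounded by a function of that model. So every maximal a.d.f.s.\ family of the intermediate model is destroyed by the tail, and no choice of ``late iterands'' based on countable subfamilies of $\T$ can repair this: the obstruction is the new reals, not the new trees, and adding further trees to capture them would push $|\T|$ up to $\lambda$. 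The paper avoids the problem by reversing the order: it forces with $\CC_\lambda \forcingconcat \dot{\PP}_\kappa$, i.e.\ first $\lambda$-Cohen forcing to arrange $\c = \lambda$, and only then the $\kappa$-length iteration of Theorem~\ref{Theorem_Force_ADFS}, so that the maximal family is completed at the very end and nothing afterwards can injure it. This reordering is the one idea your proposal is missing; everything else then collapses to the paper's short argument.

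Two smaller points. First, once $\aT \leq \kappa$ and $\d \geq \kappa$ are in hand (the latter because unbounded/Cohen reals are added cofinally in the $\kappa$-iteration and $\kappa$ is regular), the equalities $\d = \aT = \kappa$ follow at once from the ZFC inequality $\d \leq \aT$ recorded in Section~\ref{SEC_Preliminaries}; your separate ``envelope'' argument for $\d \leq \kappa$ is essentially a proof of that inequality (every branch of $T$ is pointwise below the envelope of $T$, and maximality makes the $\kappa$ envelopes dominating), but its justification via Proposition~\ref{PROP_Diagonalization} conflates unboundedness of a single envelope with domination. Second, your claim that the stage-$\alpha$ generic tree is almost disjoint from \emph{all} trees of $V^{\PP_\alpha}$ is false --- it is almost disjoint only from the family being diagonalized; the correct way to rule out small maximal families is again via the unbounded real added at the next stage, or simply via $\d \leq \aT$.
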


Further, we utilize the forcing to add arbitrarily many maximal a.d.f.s.\ families of arbitrary sizes at the same time (see Theorem \ref{manyadfs}):

\begin{theorem*}
	Assume GCH.
	Let $\lambda$ be a cardinal of uncountable cofinality, $\theta \leq \lambda$ a regular cardinal and $\seq{\kappa_\beta}{\beta < \theta}$ a sequence of regular uncountable cardinals with $\cof(\lambda) \leq \kappa_\beta \leq \lambda$ for all $\beta < \theta$.
	Then there is a c.c.c.\ extension in which $\c = \lambda$ and $\kappa_\beta \in \spec(\aT)$ for all $\beta < \theta$.
\end{theorem*}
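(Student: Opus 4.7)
The plan is to carry out a finite-support product construction, in the spirit of Fischer-Shelah~\cite{fischershelah}. Working in a ground model of {\sf GCH}, I would let $\PP = \prod_{\beta < \theta} \QQ_\beta$ be the finite-support product, where each $\QQ_\beta$ is itself a finite-support iteration of length $\kappa_\beta$ of the single-step c.c.c.\ forcing from Definition~\ref{DEF_Forcing}. Each iteration $\QQ_\beta$ is equipped with its own bookkeeping function, enumerating at the $\kappa_\beta$ many stages all the potential intruder names relevant at coordinate $\beta$. Taken in isolation, $\QQ_\beta$ adjoins, by Theorem~\ref{Theorem_Force_ADFS}, a maximal a.d.f.s.\ family $\A_\beta$ of size $\kappa_\beta$, with the diagonalization property of Proposition~\ref{PROP_Diagonalization} responsible for maximality.

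Since each $\QQ_\beta$ is c.c.c., the product $\PP$ is c.c.c.\ by a standard $\Delta$-system argument, so cardinals are preserved. From $|\PP| \leq \lambda$ together with {\sf GCH} in the ground model one obtains $\c = \lambda$ in $V^{\PP}$ by the usual nice-name count, while the requirement $\cof(\lambda) \leq \kappa_\beta$ ensures that there are enough conditions to realize each $\kappa_\beta$ as the actual cardinality of $\A_\beta$. What still has to be shown is that, for each $\beta < \theta$, the family $\A_\beta$ constructed inside the factor extension $V^{\QQ_\beta}$ continues to be maximal in the full extension $V^{\PP}$.

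The main obstacle is precisely this preservation of maximality across the product. Fix $\beta < \theta$ and factor $\PP \cong \QQ_\beta \times \PP_{-\beta}$, with $\PP_{-\beta}$ the finite-support product of the remaining factors. By a nice-name argument for c.c.c.\ forcings, every finitely splitting tree $T$ in $V^\PP$ has a $\PP$-name depending on only countably many coordinates, so $T$ may be interpreted as lying in $V^{\QQ_\beta}[H]$, where $H$ is generic for some c.c.c.\ forcing $\R$ over $V^{\QQ_\beta}$. The crucial step is to strengthen the diagonalization of Proposition~\ref{PROP_Diagonalization} to show that the generic trees added at a sufficiently late stage of $\QQ_\beta$ catch intruder names coming from \emph{any} further c.c.c.\ extension of the current intermediate model, not only intruders already present there; concretely, one verifies that in the two-step iteration $\QQ_\beta \restr \alpha \ast \dot{\R}$, the intruder-catching step of the single-step forcing is preserved. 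Equipping $\QQ_\beta$ with a bookkeeping that enumerates precisely such $\PP$-names (of which there are $\kappa_\beta$ many under {\sf GCH}) ensures that every potential intruder is caught, so that $\A_\beta$ remains maximal in $V^\PP$; hence $\kappa_\beta \in \spec(\aT)$ holds simultaneously for all $\beta < \theta$. The hard part is exactly this strengthening of Proposition~\ref{PROP_Diagonalization} to a c.c.c.-robust form; the remaining ingredients—chain condition, cardinal arithmetic and bookkeeping—follow standard patterns.
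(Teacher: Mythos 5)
There is a genuine gap, and it sits exactly where you locate the ``hard part'': the proposed c.c.c.-robust strengthening of Proposition~\ref{PROP_Diagonalization} is not just unproven but false, and with it the whole product decomposition collapses. In your setup the family $\A_\beta$ lives entirely in the factor extension $V^{\QQ_\beta}$, so $\bigcup_{T \in \A_\beta}[T]$ is a union of compact (hence nowhere dense) subsets of $\infseqomega$ all coded in $V^{\QQ_\beta}$. The remaining factors $\PP_{-\beta}$ add Cohen reals over $V^{\QQ_\beta}$ (each single step $\TT(\T)$ adds Cohen reals by Proposition~\ref{PROP_Cohen}, and finite supports add them at limits in any case), and a Cohen real over $V^{\QQ_\beta}$ avoids every nowhere dense set coded there. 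So in $V^{\PP}$ there is a real lying on no tree of $\A_\beta$, and no bookkeeping internal to $\QQ_\beta$ can repair this: the generic trees of $\TT(\T)$ only cover the reals of $\W(\T)$ in the model where the forcing is applied, and a fixed family of compact sets in an intermediate model can never anticipate reals added afterwards. A secondary problem is the count: there are $\lambda^{\aleph_0}=\lambda$ nice $\PP$-names for finitely splitting trees, not $\kappa_\beta$, so a bookkeeping of length $\kappa_\beta<\lambda$ could not enumerate them even if the preservation step worked.

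The paper avoids this by using a single finite-support iteration of length $\lambda$ rather than a product: it fixes pairwise disjoint sets $I_\beta\subseteq\lambda$ with $|I_\beta|=\kappa_\beta$ and, crucially, each $I_\beta$ \emph{cofinal} in $\lambda$ (this is where $\cof(\lambda)\leq\kappa_\beta$ is used --- not, as you suggest, to realize the cardinality of $\A_\beta$), and at stage $\alpha\in I_\beta$ forces with $\TT(\dot{\T}^\alpha_\beta)$ to extend the $\beta$-th family while leaving the others untouched. Since the iteration is c.c.c.\ and $\cof(\lambda)>\aleph_0$, any potential intruder for $\T_\beta$ in the final model already appears in some $V[G_\alpha]$, and because $I_\beta$ is cofinal there is a later stage in $I_\beta$ at which Proposition~\ref{PROP_Diagonalization} --- in its ordinary, ground-model-intruder form --- catches it. If you want to salvage your write-up, replace the product by this interleaved iteration; the chain-condition, cardinal-arithmetic and size computations you sketch then go through as stated.
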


Finally, we conclude section 3 with a brief analysis if and when the forcing adds unbounded, 
Cohen and dominating reals (see Remark \ref{unbounded} and Propositions \ref{cohen} and \ref{dominating}).

Continuing the recent results on the existence of Sacks-indestructible witnesses for various combinatorial properties, e.g. independent families \cite{sacksmif1}, \cite{sacksmif2} or eventually different families \cite{sacksmed}, in the fourth section we extend this analysis to almost disjoint families of finitely splitting trees (see Theorem \ref{indestructible_adfs}):

\begin{theorem*}[CH]
	There is an a.d.f.s.\ family which remains maximal after forcing with countably supported iteration or product of Sacks forcing of arbitrary length.
\end{theorem*}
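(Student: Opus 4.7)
The plan is a transfinite recursion of length $\omega_1$ under {\sf CH}, building the family $\A = \{T_\alpha : \alpha < \omega_1\}$ stage-by-stage so that every potential witness to non-maximality in any countably supported Sacks extension is diagonalized against. Two standard features of countably supported Sacks iterations or products of arbitrary length reduce the task to an $\omega_1$-length construction: every name for a real depends only on countably many coordinates, and over any countable subproduct the set of nice names for reals has cardinality $\aleph_1$ in the {\sf CH} ground model. Thus it suffices to diagonalize against an $\omega_1$-enumeration of nice names from countable subproducts.

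First I would enumerate in order type $\omega_1$ all pairs $(p, \dot{T})$, where $p$ is a condition in a countably supported Sacks iteration or product on some countable index set and $\dot{T}$ is a $p$-nice name for a finitely splitting tree on $\omega$, so that each pair is visited cofinally often. At stage $\alpha$, I process the $\alpha$-th pair $(p_\alpha, \dot{T}_\alpha)$ by invoking the standard Sacks continuous reading of names obtained through fusion: pass to some $q \leq p_\alpha$ and a continuous function $F$ from the space of branches through $q$ into the Polish space of finitely splitting subtrees of $\finseqomega$ such that $q \Vdash \dot{T}_\alpha = F(\dot{G})$, where $\dot{G}$ codes the Sacks generics on the relevant countably many coordinates.

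Given $\A_\alpha = \{T_\beta : \beta < \alpha\}$ already constructed, there are two cases. If some $\beta < \alpha$ and $q' \leq q$ already satisfy $q' \Vdash |\dot{T}_\alpha \cap T_\beta| = \aleph_0$, no further action is needed. Otherwise, I would build $T_\alpha$ together with a fusion sequence $q = q_0 \geq q_1 \geq \cdots$ by simultaneous recursion: at fusion step $n$, use the continuity of $F$ together with the diagonalization property of Proposition~\ref{PROP_Diagonalization} to locate a node $s_n \in \finseqomega$ which lies in $F(\vec{\sigma})$ for every branch $\vec{\sigma}$ through the $n$-th approximation of the fusion condition. I then add $s_n$ to $T_\alpha$, extending $T_\alpha$ downwards in a way that keeps it finitely splitting and almost disjoint from each of the finitely many $T_\beta$ that could interfere at the current level. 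The fusion limit $q^*$ then forces $\{s_n : n < \omega\} \subseteq \dot{T}_\alpha \cap T_\alpha$, which is infinite, defeating the threat.

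The main obstacle is the interplay between the finite-splitting constraint on $T_\alpha$ and the need to extract, at each fusion step, a single node that survives \emph{all} Sacks branches below the current approximation. In the analogous MAD-family setting one can freely pick any sparse almost disjoint real; here one must build a full finitely splitting tree whose level widths are bounded in advance, while simultaneously respecting the almost-disjoint commitments to every $T_\beta$ with $\beta < \alpha$. The diagonalization framework of Section~3, together with the fact that at each finite level only finitely many $T_\beta$ can share a common node with the growing stem of $T_\alpha$, is precisely what allows the recursion to proceed without blocking. After $\omega_1$ stages the resulting $\A$ is maximal in every countably supported Sacks extension of arbitrary length: any would-be almost disjoint new finitely splitting tree has a nice name captured by some enumerated pair $(p_\alpha, \dot{T}_\alpha)$, and was therefore forced at stage $\alpha$ to meet some $T_\gamma$ with $\gamma \leq \alpha$ infinitely often.
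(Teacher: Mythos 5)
Your overall shape (a length-$\omega_1$ recursion under {\sf CH} driven by continuous reading of names and fusion) matches the paper, but there are two genuine gaps. First, the reduction to an $\aleph_1$-sized list of targets is asserted rather than proved, and it is exactly the hard point of the theorem, since the forcing has \emph{arbitrary} length. Your enumeration of ``pairs $(p,\dot T)$ over countable index sets'' does not obviously capture a name in a countable support \emph{iteration} of length $\lambda$: one cannot restrict such an iteration to an arbitrary countable subset of the index set, and the iterands beyond an initial segment are themselves names. The paper avoids this by never enumerating conditions or names of the big forcing at all: it enumerates only ground-model \emph{codes} $f:{^{<\omega}(^{<\omega}2)}\to\finseqomega$ for continuous functions on the fixed space ${^\omega(^\omega2)}$, runs the construction against the single fixed forcing $\SS^{\aleph_0}$ using Lemma~\ref{LEM_Extend_Ctbl_ADFS}, upgrades the conclusion $p\forces f^*(s_{\gen})\in[T]$ to the ground-model statement ``$f^*(x)\in[T]$ for all $x\in[p]$'', and then, given an arbitrary $\PP$-name $\dot f$ in the length-$\lambda$ iteration or product, uses Lemma~\ref{LEM_Continuous_Reading} to write $\dot f=f^*(\dot e^{q,\Sigma}(s_{\gen}))$, pulls $p$ back through the induced homeomorphism $\dot e^{q,\Sigma}$ to a condition $r\extends q$, and concludes by $\Pi^1_1$-absoluteness. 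This transfer step (homeomorphism plus absoluteness) is entirely missing from your argument, and without it the final sentence of your proof does not go through for iterations.

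Second, your construction of the new family member $T_\alpha$ is not tight enough. Picking at each fusion step a single node $s_n$ lying in $F(\vec\sigma)$ for all branches $\vec\sigma$ and taking the downward closure of $\{s_n:n<\omega\}$ need not yield a finitely splitting pruned tree: if the $s_n$ do not form a chain they may split off a common node in infinitely many ways, and nothing in your recursion forces them to cohere. The paper instead diagonalizes against names for \emph{reals} (potential branches missing every member of the family, which suffices by Lemma~\ref{LEM_Equivalent_Diagonalization}) and takes $S=\set{s\in\finseqomega}{\exists r\extends q \text{ with } r\forces s\subseteq\dot f}$, which is finitely splitting precisely because $\SS^\lambda$ is $\infseqomega$-bounding; almost disjointness from the earlier $T_n$ is secured beforehand by a fusion through the dense sets $D_n$ forcing $\dot f\restr k_n\notin T_n$. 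Finally, your appeal to Proposition~\ref{PROP_Diagonalization} is a misapplication: that proposition concerns the generic trees added by the c.c.c.\ forcing $\TT(\T)$ of Section~3 and plays no role in the Sacks fusion argument.
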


The above result answers in particular Question 2 of Newelski's~\cite{New87}, see the discussion at the end of section 4. First, we give a brief reminder of common definitions of Sacks forcing and fusion for countably supported products of Sacks forcing.
For the proof of Theorem \ref{indestructible_adfs} we will do an analogous construction as for the construction of a Sacks-indestructible maximal eventually different family by Fischer and Schrittesser in \cite{sacksmed}.

Thus, the two main ingredients of Theorem \ref{indestructible_adfs} are a particularly nice version of continuous reading of names for countably supported iterations of Sacks forcing (see Lemma 4 of \cite{sacksmed}) and the following Lemma \ref{extendctbladfs}:

\begin{lemma*}
Let $\T$ be a countable a.d.f.s.\ family, $\lambda$ be a cardinal, $p \in \SS^\lambda$ and $\dot{f}$ be a $\SS^\lambda$-name for a real such that for all $T \in \T$ we have
$$p \forces \dot{f} \notin [T].$$
Then there is a finitely splitting tree $S$ and $q \extends p$ such that $\T \cup \simpleset{S}$ is an a.d.f.s.\ family and
$$q \forces \dot{f} \in [S].$$
\end{lemma*}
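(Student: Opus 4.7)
The plan is to convert $\dot{f}$ into a continuous function of the generic via the nice continuous reading of names (Lemma 4 of \cite{sacksmed}), and then perform a Sacks-style fusion to shrink $p$ so that the image of this coding avoids the branches of every tree in $\T$. The resulting compact image will then be packaged into the desired tree $S$.

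First I would apply the nice continuous reading of names for countably supported products of Sacks forcing (Lemma 4 of \cite{sacksmed}) to produce $p^* \extends p$ together with a continuous function $F \colon [p^*] \to \infseqomega$ such that
$$
    p^* \forces \dot{f} = F(\dot{G}),
$$
where $[p^*]$ denotes the compact set of branches canonically associated with $p^*$.

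Next, for each $T \in \T$ I would consider the closed set $A_T := F^{-1}([T]) \cap [p^*]$. The key observation is that $A_T$ cannot contain a perfect subset of the form $[q^{**}]$ for any sub-condition $q^{**} \extends p^*$, since otherwise $q^{**} \forces F(\dot{G}) \in [T]$, and together with $q^{**} \extends p$ this would contradict the hypothesis $p \forces \dot{f} \notin [T]$. Enumerating $\T = \simpleset{T_n : n \in \omega}$ and invoking this observation at each stage, I would carry out a diagonal Sacks fusion across the countable support of $p^*$, killing $F^{-1}([T_n])$ at stage $n$, to obtain $q \extends p^*$ with $F([q]) \cap [T] = \emptyset$ for every $T \in \T$ simultaneously.

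Finally I would set
$$
    S := \set{s \in \finseqomega}{s \subseteq F(g) \text{ for some } g \in [q]},
$$
the tree of initial segments of the compact set $F([q]) \subseteq \infseqomega$. Continuity of $F$ and compactness of $[q]$ give that $F([q])$ is compact, so $S$ is finitely splitting and $[S] = F([q])$. By construction $[S] \cap [T] = \emptyset$ for every $T \in \T$; equivalently $S \cap T$ is a finitely splitting tree with no infinite branch, hence finite by König's Lemma, so $\T \cup \simpleset{S}$ is an a.d.f.s.\ family. Moreover
$$
    q \forces \dot{f} = F(\dot{G}) \in F([q]) = [S],
$$
as required.

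The hard part will be the middle step: for the countably supported Sacks product, the "no perfect sub-condition inside $A_T$" observation must be promoted into an actual fusion schedule that coordinates perfect subtrees across the countable support of $p^*$ to kill all of the $F^{-1}([T_n])$'s simultaneously. The countability of $\T$ is essential, as it permits dedicating a single fusion stage to each $T_n$; and the strength of Lemma 4 of \cite{sacksmed} is precisely what makes this coordination viable, since it produces an $F$ for which perfect sub-conditions of $p^*$ and perfect subsets of the image interact in the Sacks-amenable way needed at each fusion step.
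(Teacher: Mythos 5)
Your overall architecture matches the paper's: enumerate $\T = \set{T_n}{n<\omega}$, run a single fusion through the countable support dedicating stage $n$ to $T_n$, and at the end package the possible values of $\dot f$ below the fused condition into a finitely splitting tree $S$. The differences of routing (first passing to continuous reading of names via Lemma~\ref{LEM_Continuous_Reading}, and taking $S$ to be the image $F([q])$ rather than the hull $\set{s \in \finseqomega}{\exists r \extends q \text{ with } r \forces s \subseteq \dot f}$ obtained from $\infseqomega$-bounding) are harmless; note that the paper's proof of this lemma does not need continuous reading at all, reserving it for the later indestructibility theorem.

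The genuine gap is the step you yourself flag as ``the hard part'', and the issue is that the invariant you propose to drive the fusion is the wrong one. Knowing only that the closed set $A_{T_n} = F^{-1}([T_n]) \cap [p^*]$ contains no set of the form $[q^{**}]$ does not, in a countably supported product, justify shrinking within a single fusion stage $\extends_{F_n,n}$ to a condition whose branches avoid $A_{T_n}$: ``contains no perfect box'' is far weaker than ``every condition can be shrunk away from it'', and you give no argument bridging the two. What actually makes each stage work is the hypothesis $p \forces \dot f \notin [T_n]$ together with the fact that $[T_n]$ is closed, so that non-membership is witnessed at a finite level: the set $D_n := \set{r}{\exists k<\omega \text{ with } r \forces \dot f \restr k \notin T_n}$ is dense open below $p$. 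One then needs the gluing lemma (Lemma~\ref{LEM_Dense_Open}) to put all finitely many restrictions $q_n \restr \sigma$, for $\sigma$ suitable for $q_n$, $F_n$ and $n$, into $D_n$ simultaneously while preserving $\extends_{F_n,n}$, and to take the maximum of the finitely many witnesses $k_\sigma$ to obtain a single $k_n$ with $q_{n+1} \forces \dot f \restr k_n \notin T_n$. This finitary form is also what makes the final almost-disjointness argument clean, since it places $S \cap T_n$ below level $k_n$. Your proposal can be repaired along exactly these lines, but as written the decisive mechanism of the fusion step is missing.
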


Here, $\SS^\lambda$ is the countably supported product of Sacks forcing of size $\lambda$.
Under CH we may apply this lemma in a diagonal fashion to ensure that every new real may not be a witness that our a.d.f.s.\ family is not maximal in the generic extension (see Theorem \ref{indestructible_adfs}).
Notice the similarity to Lemma 7 in \cite{sacksmed} which can be rephrased as

\begin{lemma*}
	Let $\F$ be a countable eventually different (e.d.) family, $\lambda$ be a cardinal, $p \in \SS^\lambda$ and $\dot{f}$ be a $\SS^\lambda$-name for a real such that for all $g \in \F$ we have
	$$p \forces \dot{f} \text{ is eventually different from } g.$$
	Then there is a real $h$ and $q \extends p$ such that $\F \cup \simpleset{h}$ is an e.d.\ family and
	$$q \forces \dot{f} \text{ is not eventually different from } h.$$
\end{lemma*}

We conclude section 4 with an in-depth ``isomorphism of names"-argument to show that in generic extensions of models of CH by countably supported Sacks forcing there are no maximal a.d.f.s.\ families of size $\kappa$, where $\aleph_1 < \kappa < \c$.
Thus, by combining both results we obtain Theorem~\ref{thm.small.spectra}:

\begin{theorem*}[CH] \label{specsacks}
	Let $\lambda$ be a cardinal such that $\lambda^{\aleph_0} = \lambda$. Then $\SS^\lambda \forces ``\spec(\aT) = \simpleset{\aleph_1, \lambda}$".
\end{theorem*}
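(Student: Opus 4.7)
The two easy inclusions come first. Since $|\SS^\lambda| \leq \lambda^{\aleph_0} = \lambda$ and $\SS^\lambda$ is a countably supported product of Axiom~A forcings (hence proper and cardinal preserving), we have $\c = \lambda$ in the extension, so $\lambda \in \spec(\aT)$ via the trivial partition of $\infseqomega$ into $\c$-many single-branch trees $T_x := \simpleset{x \restr n : n < \omega}$ (these are finitely splitting, pairwise almost disjoint, and $[T_x] = \simpleset{x}$ so their branch sets partition $\infseqomega$). By Theorem~\ref{THM_Sacks_ADFS}, the maximal a.d.f.s.\ family of size $\aleph_1$ constructed under {\sf CH} in $V$ remains maximal after forcing with $\SS^\lambda$, giving $\aleph_1 \in \spec(\aT)$ as well.

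The real content is ruling out intermediate $\kappa$ with $\aleph_1 < \kappa < \lambda$ by an isomorphism-of-names argument. Suppose toward contradiction that $p \in \SS^\lambda$ forces $\dot{\T} = \seq{\dot{T}_\alpha}{\alpha < \kappa}$ to enumerate a maximal a.d.f.s.\ family. Using the continuous reading of names for $\SS^\lambda$ (the cited nice version of Lemma~4 of \cite{sacksmed}), each $\dot{T}_\alpha$ can be replaced by an equivalent name supported on some countable $A_\alpha \subseteq \lambda$. Setting $A := \bigcup_{\alpha < \kappa} A_\alpha \cup \mathrm{supp}(p)$ yields $|A| \leq \kappa < \lambda$, so there are $\lambda$-many coordinates $\xi \in \lambda \setminus A$. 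Fix such a $\xi$ and let $\dot{x}_\xi$ name the canonical real in $\infseqomega$ read off the Sacks generic at $\xi$. By maximality there are $q \extends p$ and $\alpha_0 < \kappa$ with $q \forces \dot{x}_\xi \in [\dot{T}_{\alpha_0}]$. Pick $\eta \in \lambda \setminus (A \cup \mathrm{supp}(q))$ and let $\pi$ be the automorphism of $\SS^\lambda$ induced by the transposition $\xi \leftrightarrow \eta$. Because $\simpleset{\xi,\eta} \cap A_{\alpha_0} = \emptyset$, the name $\dot{T}_{\alpha_0}$ is pointwise fixed by $\pi$, while $\pi(\dot{x}_\xi) = \dot{x}_\eta$; hence $\pi(q) \forces \dot{x}_\eta \in [\dot{T}_{\alpha_0}]$. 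The conditions $q$ and $\pi(q)$ only disagree on the swapped coordinates, so they admit a common refinement forcing two mutually Sacks-generic reals into the single compact block $[\dot{T}_{\alpha_0}]$.

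The remaining and principal difficulty is to escalate this single collision into an actual contradiction. Iterating the transposition trick with fresh coordinates $\eta_n \in \lambda \setminus (A \cup \mathrm{supp}(q))$ produces a common refinement $r$ forcing every $\dot{x}_{\eta_n}$ into $[\dot{T}_{\alpha_0}]$. To convert this into absurdity I would refine along a fusion sequence, using continuous reading to pin down $\dot{T}_{\alpha_0}$ off its countable support $A_{\alpha_0}$ and each $\dot{x}_{\eta_n}$ off the single coordinate $\eta_n$ simultaneously, and then exploit that $V[G \restr A_{\alpha_0}] \models {\sf CH}$ together with the finite-splitting constraint — only finitely many nodes per level of $T_{\alpha_0}$ — against the perfect-set-many continuous-reading alternatives supplied by each fresh Sacks coordinate. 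Executing this delicate count is the main obstacle and constitutes the in-depth isomorphism-of-names argument referenced in the excerpt; its conclusion is that no $\SS^\lambda$-name can encode a $\kappa$-sized maximal a.d.f.s.\ family, so $\spec(\aT) = \simpleset{\aleph_1, \lambda}$ in the extension.
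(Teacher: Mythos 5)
Your first paragraph (the easy inclusions $\aleph_1, \lambda \in \spec(\aT)$ and $\c = \lambda$) is fine and agrees with the paper, modulo the fact that the upper bound $\c \leq \lambda$ needs the counting of continuously read names rather than just properness. The main part, however, has a genuine gap, and the gap is structural rather than a missing computation. Your argument sends mutually generic Sacks reals $\dot{x}_{\eta_n}$ into a single block $[\dot{T}_{\alpha_0}]$ and hopes to contradict finite splitting. But this is not contradictory: a finitely splitting tree can be perfect, and $[\dot{T}_{\alpha_0}]$ can be an uncountable compact set containing arbitrarily many mutually generic reals (for instance, if the family refines the block $2^{<\omega}$, \emph{all} Sacks reals land in the blocks covering $2^\omega$, and a fusion argument cannot rule this out). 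The decisive symptom is that your argument nowhere uses $\kappa > \aleph_1$ --- it only uses $\kappa < \lambda$ to find fresh coordinates --- yet the conclusion is false for $\kappa = \aleph_1$ by Theorem~\ref{THM_Sacks_ADFS}. So no escalation of the transposition trick along these lines can close the proof.

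The paper's isomorphism-of-names argument runs in the opposite direction: instead of deriving an absurdity about one block, it manufactures a \emph{new} tree almost disjoint from the whole family. Concretely, one picks $\aleph_2$-many of the names $\dot{T}_\alpha$ together with conditions $p_\alpha \leq p$ below which they are read continuously on countable supports $U_\alpha$, applies the $\Delta$-system lemma and two {\sf CH}-counting arguments (only $\aleph_1$-many conditions with domain inside a fixed countable set; only $\aleph_1$-many continuously read names below a fixed condition) to extract $\aleph_2$-many indices on which the pairs $(p_\alpha, \dot{T}_\alpha)$ are pairwise conjugate under involutions $\pi_{\alpha,\beta}$ swapping $U_\alpha$ and $U_\beta$ over the root $U$. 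One then transports such a pair to a fresh countable set $U_\kappa$ meeting the union $W$ of the supports of \emph{all} $\kappa$-many names only in $U$ (using $\kappa < \lambda$), obtaining $p_\kappa \leq p$ and a name $\dot{T}_\kappa$. To see $p_\kappa$ forces $\dot{T}_\kappa$ almost disjoint from each $\dot{T}_\beta$, one chooses, for each $\beta < \kappa$, some conjugate index $\alpha$ with $U_\alpha \cap W_\beta \subseteq U$ --- this is exactly where $|W_\beta| \leq \aleph_1 < \aleph_2$ is needed, i.e.\ where $\kappa > \aleph_1$ enters --- so that $\pi_{\alpha,\kappa}$ fixes $\dot{T}_\beta$ and carries the statement ``$\dot{T}_\alpha$ and $\dot{T}_\beta$ are almost disjoint'' to the desired one. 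This contradicts maximality directly. You would need to replace your escalation step with a construction of this kind; as written, the proposal does not yield a contradiction for any $\kappa$.
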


Finally, in the last section we will use an ``average of names"-argument to prove that in the template model constructed by Shelah in \cite{shelahtemplate} to obtain the consistency of $\d < \a$, we also have that $\a = \aT$.
Thus, we extend Shelah's theorem to the following (see Theorem \ref{thm.aT.ultrapowers}):

\begin{theorem*}
	Assume $\kappa$ is measurable, and $\kappa < \mu < \lambda$, $\lambda = \lambda^\omega$, $\nu^\kappa < \lambda$ for all $\nu < \lambda$, are regular cardinals.
	Then there is a forcing extension satisfying $\b = \d = \mu$ and $\a = \aT = \c = \lambda$.
\end{theorem*}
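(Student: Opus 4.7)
The plan is to adapt Shelah's average-of-names technique from \cite{shelahtemplate} (originally used to obtain $\a = \lambda$) to a.d.f.s.\ families. Since Shelah's model already satisfies $\b = \d = \mu$ and $\a = \c = \lambda$, and $\aT \leq \c$ holds trivially, the entire content of the theorem reduces to showing that in the extension no a.d.f.s.\ family of size strictly less than $\lambda$ is maximal, i.e.\ $\aT \geq \lambda$.

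First, I would fix in the extension a putative maximal a.d.f.s.\ family $\T = \simpleset{T_\alpha : \alpha < \theta}$ of size $\theta < \lambda$, and choose in the ground model nice names $\dot T_\alpha$ for each $T_\alpha$. Using the template structure of Shelah's forcing $\PP$ together with the cardinal-arithmetic hypothesis $\nu^\kappa < \lambda$ for all $\nu < \lambda$, I would reflect $\T$ to a subtemplate of size less than $\lambda$, so that each $\dot T_\alpha$ depends only on coordinates from a fixed small piece of the template. This is the same reduction Shelah uses to show $\a = \lambda$, and it carries over verbatim since it only exploits a name count and the indiscernibility of template coordinates, not the particular combinatorics of subsets of $\omega$.

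Second, I would perform the average-of-names construction at the measurable $\kappa$. Along a $\kappa$-indexed sequence of template coordinates lying above the reflected subtemplate, I would construct a sequence of candidate names $\dot S_i$ for finitely splitting trees (each extending a fixed stem and designed to diagonalize against the reflected part of $\T$), and then average them via the normal ultrafilter $\U$ on $\kappa$ to produce a single name $\dot S^{*} = [\dot S_i]_\U$. The verifications are then: (i) finite splitting is preserved, since $\sigma$-completeness of $\U$ ensures that for each $n < \omega$ the forced size of level $n$ of $\dot S_i$ stabilizes on a $\U$-large set of $i$'s, giving a finite $n$-th level for $\dot S^{*}$; (ii) $\dot S^{*}$ is forced to be an infinite finitely splitting tree, so by K\"onig's lemma $[\dot S^{*}]$ is a nonempty compact subset of $\infseqomega$; and (iii) for each fixed $\alpha < \theta$ one has $\dot S^{*} \cap \dot T_\alpha$ forced to be finite, because each $\dot S_i \cap \dot T_\alpha$ is forced to be finite and $\sigma$-completeness pushes finiteness through to the ultralimit.

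The main obstacle will be to secure clause (iii) uniformly for all $\alpha < \theta$ simultaneously below a single condition extending $p$, since $\theta$ can be much larger than $\kappa$ and a naive intersection of $\theta$-many $\U$-measure-one sets is no longer measure-one. Overcoming this requires combining the indiscernibility of the $\kappa$-many template coordinates with the counting hypothesis $\nu^\kappa < \lambda$, so that only fewer than $\lambda$ distinct ``intersection patterns'' $(i,\alpha) \mapsto \dot S_i \cap \dot T_\alpha$ can arise along the $\kappa$-sequence, and these can then be stabilized in a single step using normality of $\U$. Once this uniform control is achieved, $\dot S^{*}$ is forced to be almost disjoint from every $T_\alpha$, contradicting the assumed maximality of $\T$. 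Since $\theta < \lambda$ was arbitrary, we conclude $\aT \geq \lambda$ and therefore $\a = \aT = \c = \lambda$ in the extension as required.
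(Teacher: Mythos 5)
Your high-level framing is right: the content of the theorem is $\aT\geq\lambda$, the template/reflection machinery of Shelah--Brendle carries over unchanged, and the new work happens at the measurable via an average-of-names argument. But the core of your average-of-names step has a genuine gap, and it is not cosmetic: you propose to build candidate names $\dot S_i$ for finitely splitting trees that are ``designed to diagonalize against the reflected part of $\T$'' and then average them, with clause (iii) resting on the assertion that \emph{each} $\dot S_i\cap\dot T_\alpha$ is forced to be finite. If a single such name $\dot S_i$ existed -- a name for a finitely splitting tree almost disjoint from every $\dot T_\alpha$ -- then $\T$ would already fail to be maximal in the intermediate model and no ultrapower would be needed; if instead each $\dot S_i$ only diagonalizes the reflected part, then clause (iii) is simply false for the unreflected $\alpha$'s, and your closing paragraph about stabilizing ``intersection patterns'' via normality does not supply the missing mechanism. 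The whole point of the method is that no single name works.

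What the paper actually does (Lemma~\ref{LEM_ADFS_In_Ultrapower}) is average the names of the family's \emph{own} first $\kappa$-many members: given nice names $(A^\alpha,T^\alpha_\bullet)$ for $\dot T^\alpha$, $\alpha<\kappa$, the average $[\,\dot T^\alpha\,]_\U$ is again a nice name for a finitely splitting tree (finite splitting survives because each finitely splitting $n$-tree is a finite object, so the level-$n$ values are constant on a $\U$-large set by $<\!\kappa$-completeness), and for each fixed $\beta<\lambda$ almost disjointness of the average from $\dot T^\beta$ comes for free from almost disjointness of the family: all but at most one $\alpha<\kappa$ satisfies $\alpha\neq\beta$, the bounds $n^\alpha_i$ with $q^\alpha_i\forces \dot T^\alpha\cap\dot T^\beta\subseteq\finseqleqomegaarg{n^\alpha_i}$ are decided along maximal antichains, and countable completeness turns the averaged antichain into a maximal antichain deciding a standard bound $n_i=[n^\alpha_i]_\U$. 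Crucially, this almost-disjointness is forced by the trivial condition for each $\beta$ separately, so the conjunction over all $\beta<\lambda$ is automatic; your worry about intersecting $\theta$-many measure-one sets below a single condition never arises. Replacing your externally constructed $\dot S_i$ by the family's own names, and running the finiteness transfer antichain-by-antichain rather than globally, closes the gap and recovers the paper's proof.
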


\section{Preliminaries}\label{prelim}

In the following every tree $T$ will be a tree on $\omega$, i.e.\ $T \subseteq \finseqomega$ is non-empty and closed under initial sequences.
For a tree $T$, we recall the following notions:

\begin{enumerate}
	\item If $s \in T$ and $n < \omega$, then $\suc_T(s) = \set{n < \omega}{s \concat n \in T}$ and $T \restr n = T \cap \finseqleqomegaarg{n}$.
	\item $T$ is pruned iff $\suc_T(s) \neq \emptyset$ for all $s \in T$.
	\item $T$ is finitely splitting iff $T$ is pruned and $\suc_T(s)$ is finite for all $s \in T$.
	\item $[T] = \set{f \in \infseqomega}{(\forall n < \omega)(f \restr n \in T)}$.
\end{enumerate}
For $n < \omega$ we call $T \subseteq \finseqleqomegaarg{n}$ an $n$-tree and use the same definitions as above, where we replace $(2)\&(4)$ with $(2^*)\&(4^*)$, respectively:
\begin{enumerate}
	\item[(2*)] $T$ is pruned iff $\suc_T(s) \neq \emptyset$ for all $s \in T \cap \finseqomegaarg{n}$.
	\item[(4*)] $[T] = T \cap \finseqeqomegaarg{n}$.
\end{enumerate}
For trees $T \subseteq \finseqomega$ we call $[T]$ the branches of $T$ and for $n$-trees $T \subseteq \finseqleqomegaarg{n}$ we call $[T]$ the leaves of $T$.
Remember that the non-empty closed sets of $\finseqomega$ are in bijection with pruned trees on $\omega$ in the following way.
Let $T \subseteq \finseqomega$ be a pruned tree and $C \subseteq \infseqomega$ be closed and non-empty, then the maps
$$
T \mapsto [T] \quad \text{and} \quad C \mapsto T_C = \set{s \in \finseqomega}{(\exists f \in C)(s \subseteq f)}
$$
are inverse to each other.
Furthermore, the bijection restricts to a bijection between finitely splitting trees on $\omega$ and non-empty compact subsets of $\infseqomega$. The following object will be of central interest for our study:
\begin{definition}\label{spec_aT}
The partition spectrum of the Baire space is the set 
$$\spec(\aT) = \set{|P|}{P \text{ is a partition of } \infseqomega \text{ into compact sets}}$$ and $\aT = \min\spec(\aT)$.	
\end{definition}

As $\infseqomega$ is not $\sigma$-compact, $\aleph_1 \leq \aT$ and the partition of $\infseqomega$ into singletons witnesses that $\aT \leq \c$.
Furthermore, by the aforementioned result of Brian~\cite{brian}, $\aT$ and $\spec(\aT)$ do not depend on the choice of the underlying Polish space $X$ and also do not depend on the choice of compact, closed or $F_\sigma$ partitions.

\begin{definition}
	A family $\T$ of finitely splitting trees is called an almost disjoint family of finitely splitting trees (or an a.d.f.s.\ family) iff $S$ and $T$ are almost disjoint, i.e.\ $S \cap T$ is finite for all $S,T \in \T$.
	It is called maximal iff there is no finitely splitting tree $S$ which is almost disjoint from every $T \in \T$.
\end{definition}

Almost disjoint families of finitely splitting trees will be the combinatorial object of main interest for this paper.	
Notice that by König's lemma for finitely splitting trees $S$ and $T$ we have that $S$ and $T$ are almost disjoint iff $[T] \cap [S] = \emptyset$.
Thus, using the above identification of finitely splitting trees and non-empty compact subsets of $\infseqomega$, we can also identify maximal a.d.f.s.\ families with partitions of $\infseqomega$ into non-empty compact sets. Moreover, we get that an a.d.f.s.\ family $\T$ is maximal iff for all reals $f \in \infseqomega$ there is a $T \in \T$ such that $f \in [T]$. To conclude this section we note that $\d \leq \aT$ \cite{spinas} and $\spec(\aT)$ is closed under singular limits \cite{brian}.

\section{Forcing the existence of maximal a.d.f.s.\ families}

In this chapter we will define and analyse a c.c.c.\ forcing that allows us for any $\kappa$ of uncountable cofinality to explicitly add a maximal a.d.f.s.\ family of size $\kappa$ or equivalently a partition of $\infseqomega$ into $\kappa$-many compact sets. We will also see that we can use this forcing to change the value of $\aT$ to any regular uncountable cardinal. These results mainly depend on the fact that our forcing diagonalizes a given a.d.f.s\. family in the following sense:

\begin{definition} \label{diagdef1}
Let $\T$ be an a.d.f.s.\ family.	We define
$$\W(\T) = \set{f \in \infseqomega}{\text{for all } T \in \T \text{ we have  } f \notin [T]}.$$ Furthermore, we define
$$\I^+(\T) = \set{T}{T \text{ is a finitely splitting tree with } [T] \subseteq \W(\T)}.$$ 
\end{definition}

Note that $\W(\T)$ is the set of all reals that $\T$ is missing to be maximal and $S \in \I^+(\T)$ iff $S$ is almost disjoint from every $T \in \T$.
In particular, $\I^+(\T)$ consists of exactly those finitely splitting trees that we may add to $\T$ without destroying almost disjointness. The notion $\I^+(\T)$ should emphasize the similarity to $\I^+(\A)$ for an a.d. family $\A$ in the subsequent diagonalization property.
However, contrary to $\I^+(\A)$ it is generally not induced by an ideal.

\begin{definition} \label{diagdef2}
Let $\T$ be an a.d.f.s.\ family, let $\PP$ be a forcing notion and $G$ be a $\PP$-generic filter. We say that a set of finitely splitting trees $\S$ in $V[G]$ diagonalizes $\T$ iff $\T \cup \S$ is an a.d.f.s.\ family and for all $T \in \I^+(\T)^V$ we have that $\simpleset{T} \cup \S$ is not almost disjoint.
\end{definition}

\begin{lemma}
	The above definition is equivalent to: $\T \cup \S$ is an a.d.f.s.\ family and for all $f \in \W(\T)^V$ there is an $S \in \S$ with $f \in [S]$.
\end{lemma}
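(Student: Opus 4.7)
The plan is to prove the two implications of the equivalence by exploiting the following simple observation: for any $f \in \infseqomega$ the chain $T_f := \set{f \restr n}{n < \omega}$ is itself a finitely splitting tree (with exactly one successor at every node) whose body is the singleton $\simpleset{f}$. This tree lies in $V$ whenever $f$ does, and it provides the bridge needed to translate between statements about branches in $\W(\T)^V$ and statements about trees in $\I^+(\T)^V$.

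For the implication from the original definition to the new one, I would start with a real $f \in \W(\T)^V$ and form the tree $T_f$ in $V$. Since $[T_f] = \simpleset{f} \subseteq \W(\T)^V$, we have $T_f \in \I^+(\T)^V$, so by the assumed diagonalization property $\simpleset{T_f} \cup \S$ fails to be an a.d.f.s.\ family. Because $\S$ itself is a.d.f.s.\ (as a subfamily of $\T \cup \S$, which is a.d.f.s.\ by the common hypothesis), the failure must occur between $T_f$ and some $S \in \S$, i.e.\ $T_f \cap S$ is infinite. Since $T_f$ is a chain, $f \restr n \in S$ for infinitely many $n$, and closure of $S$ under initial segments then forces $f \restr n \in S$ for every $n$, giving $f \in [S]$ as desired.

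For the converse, I would start from a tree $T \in \I^+(\T)^V$ and pick any branch $f \in [T]$, which exists by K\"onig's lemma since $T$ is a nonempty finitely splitting tree. By definition of $\I^+(\T)$, $f \in \W(\T)^V$, so the new hypothesis supplies some $S \in \S$ with $f \in [S]$. The tree $T \cap S$ then contains the infinite chain $\set{f \restr n}{n < \omega}$, hence is infinite, so $T$ and $S$ are not almost disjoint and $\simpleset{T} \cup \S$ is not an a.d.f.s.\ family.

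No real obstacle appears; both directions are elementary consequences of the bijective correspondence between pruned trees and their bodies together with K\"onig's lemma. The only minor point worth emphasizing in the writeup is that in the forward direction $T_f$ genuinely belongs to $V$ whenever $f \in V$, so that the hypothesis on $\I^+(\T)^V$ is legitimately applicable to it.
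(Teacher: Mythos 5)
Your proof is correct and follows essentially the same route as the paper: both directions hinge on the tree $T_f$ with $[T_f]=\simpleset{f}$ for the forward implication and on picking a branch $f\in[T]^V\subseteq\W(\T)^V$ for the converse. The only cosmetic difference is that you verify infiniteness of the intersections by hand, whereas the paper invokes the earlier observation that for finitely splitting trees almost disjointness is equivalent to disjointness of the branch sets.
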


\begin{proof}
We argue in $V[G]$. Let $f \in \W(\T)^V$. Then $T_f = \set{s \in \finseqomega}{s \subseteq f} \in \I^+(\T)^V$. By assumption choose $S \in \S$ such that $[T_f] \cap [S] \neq \emptyset$. But $[T_f] = \simpleset{f}$, so $f \in [S]$.
	
Conversely, let $T \in \I^+(\T)^V$. Choose $f \in \W(\T)^V$ such that $f \in [T]$. By assumption choose $S \in \S$ such that $f \in [S]$. But then $f \in [S] \cap [T]$, so $S$ and $T$ are not almost disjoint.
\end{proof}

The above observation motivates the following definition:

\begin{definition}
$\TT$ is the forcing consisting of partial functions $p:\omega \times \finseqomega \to 2$, such that 
\begin{enumerate}
\item $\dom(p) = F_p \times \finseqleqomegaarg{n_p}$, where $n_p \in \omega$ and $F_p \in \finsubset{\omega}$;
\item for all $i \in F_p$, $T^i_p = \set{s \in \finseqleqomegaarg{n_p}}{p(i)(s) = 1} (\subseteq \finseqleqomegaarg{n_p})$ is a finitely splitting $n_p$-tree.
\end{enumerate}
We order $\TT$ by $p \extends q$ iff $p \supseteq q$.
Note that this implies that $T_p^i$ end-extends $T_q^i$ for every $i \in F_q$.
\end{definition}

\begin{definition}\label{forcingdef}
Let $\T$ be an a.d.f.s.\ family. $\TT(\T)$ is the forcing notion of all pairs $(p, w_p)$ such that $p \in \TT$ and $w_p: \W(\T) \to \omega$ is a partial function, so that $\dom(w_p) = H_p$ for some $H_p \in \finsubset{\W(\T)}$ and for all $f \in H_p$ we have $w_p(f) \notin F_p$ or $f \restr n_p \in T^{w_p(f)}_p$.
We order $\TT(\T)$ by
	\begin{align*}
	(p, w_p) \extends (q, w_q) \quad \text{iff} \quad &p \extends_{\TT} q \text{ and } w_p \supseteq w_q
	\end{align*}
\end{definition}

Notice that for every maximal a.d.f.s.\ family $\T$ we have $\W(\T) = \emptyset$, so $\TT(\T) \cong \TT \cong \CC$ as $\TT$ is countable.
The intuition is that $\TT(\T)$ adds $\omega$-many new finitely splitting trees, where the side conditions $w_p$ will ensure that every element of $\W(\T)$ will be contained in the branches of one those new trees.
The main difference with the forcing sketched in~\cite[Theorem 3.1]{brian} is that only one of the new trees is allowed to contain an element of $\W(\T)$ as its branch, which ensures that the new trees are almost disjoint. Thus, this forcing may be used to explicitly add partitions of $\infseqomega$ into compact sets rather than only a partition into $F_\sigma$-sets.

\begin{lemma}\label{Knaster}
	$\TT(\T)$ is Knaster.
\end{lemma}

\begin{proof}
Let $A \subseteq \TT(\T)$ be of size $\omega_1$.
Since	$\TT$ is countable, we may assume that $p = q$ for all $(p, w_p),(q, w_q) \in A$. Moreover, by the $\Delta$-system lemma applied to $\set{H_p}{(p, w_p) \in A}$, we may assume that there is a root $R \in \finsubset{\W(\T)}$, i.e.\ that $H_p \cap H_q = R$ for all $(p, w_p) \neq (q, w_q) \in A$. However, there are only countably many functions from $R \to \omega$, so we may assume $w_p \restr R = w_q \restr R$ holds for all $(p, w_p),(q, w_q) \in A$.
	
It remains to observe, that then, the elements of $A$ are pairwise compatible. Indeed, consider $(p, w_p), (q, w_q) \in A$. By choice of $R$, $w_{p \cup q} = w_p \cup w_q$ is a function with $\dom(w_{p \cup q}) = H_p \cup H_q$. We claim that $(p, w_{p \cup q}) \in \TT(\T)$. If this is the case, then the result follows as $p = q = p \cup q$ implies that $(p \cup q, w_{p \cup q}) \extends (p, w_p), (q, w_q)$.
	
Let $f \in H_p \cup H_q$. If $f \in H_p$, then either $w_p(f) \in F_p$ and  so $f \restr n \in T^{w_p(f)}_p = T^{w_{p \cup q}(f)}_{p \cup q}$, or otherwise we have $w_p(f) \notin F_p$ and so  $w_p(f) \notin F_q$,  $w_p(f) \notin F_{p \cup q}$. Analogously, if $f \in H_q$, then either $w_q(f) \in F_q$ and so $f \restr n \in T^{w_q(f)}_q = T^{w_{p \cup q}(f)}_{p \cup q}$, or otherwise we have $w_q(f) \notin F_q$ and so $w_q(f) \notin F_p$, $w_q(f) \notin F_{p \cup q}$.
\end{proof}

\begin{definition}
Let $\T$ be an a.d.f.s.\ family and let $G$ be a $\TT(\T)$-generic filter. In $V[G]$ we let $\S_G = \set{S_{G,i}}{i \in \omega}$, where $S_{G,i} = \set{s \in \finseqomega}{ \text{there is } (p, w_p) \in G \text{ with } p(i)(s) = 1}$ for $i \in \omega$.
\end{definition}

Next, we show that in $V[G]$ the family $\S_G$ diagonalizes $\T$.
First, we prove that $\T \cup \S$ is an a.d.f.s.\ family.
We will make use of the following density arguments:

\begin{proposition}\label{prop2.8}
Let $\T$ be an a.d.f.s.\ family. Let $(p, w_p) \in \TT(\T)$ and $i_0 \in \omega \setminus F_p$. Then there is  $q\in\mathbb{T}$ such that $q\leq_{\mathbb{T}} p$,  $\dom(q) = (F_p \cup \simpleset{i_0}) \times \finseqleqomegaarg{n_p}$, $(q, w_p) \in \TT(\T)$ and $(q, w_p) \extends (p, w_p)$.
\end{proposition}

\begin{proof}
Choose any finitely splitting $n_p$-tree $T$ which contains the leaf $f \restr n_p$ for every $f \in H_p$ with $w_p(f) = i_0$ and define $q:(F_p \cup \simpleset{i_0}) \times \finseqleqomegaarg{n_p}$ by
	$$
	T^i_q =
	\begin{cases}
	T		& \text{if } i = i_0\\
	T^i_p   & \text{otherwise}
	\end{cases}
	$$
	Then $q \in \TT$ and $q \extends p$.
	Furthermore, we have that $(q, w_p) \in \TT(\T)$ by choice of $T$ and clearly also $(q, w_p) \extends (p, w_p)$ holds by definition of $q$.
\end{proof}

\pagebreak

\begin{proposition}\label{prop2.9}
	Let $\T$ be an a.d.f.s.\ family.
	Let $(p, w_p) \in \TT(\T)$ and $m > n_p$.
	Then we can extend $p$ to $q$ with $\dom(q) = F_p \times \finseqleqomegaarg{m}$, $(q, w_p) \in \TT(\T)$ and $(q, w_p) \extends (p, w_p)$.
\end{proposition}

\begin{proof}
	For every $i \in F_p$ and for every $f \in H_p$ with $w_p(f) = i$ we have $f \restr n_p \in T^i_p$.
	Hence, we may choose a finitely splitting $m$-tree $T_i$ which extends $T^i_p$ and contains $f \restr m$ for every $f \in H_p$ with $w_p(f) = i$.
	Define $q:F_p \times \finseqleqomegaarg{m}$ by $T^i_q = T_i$.
	Then $q \in \TT$ and $q \extends p$.
	Furthermore, we have that $(q, w_p) \in \TT(\T)$ by choice of the $T_i$'s and clearly also $(q, w_p) \extends (p, w_p)$ holds by definition of $q$.
\end{proof}

\begin{proposition}\label{prop2.10}
Let $\T = \set{T_j}{j \in J}$ be an a.d.f.s.\ family. Let $(p, w_p) \in \TT(\T)$ and $j_0 \in J$. Then we can extend $p$ to $q$ with $\dom(q) = F_p \times \finseqleqomegaarg{n_q}$, so that $(q, w_p) \in \TT(\T)$, $(q, w_p) \leq (p, w_p)$ and we have that $[T^i_q] \cap [T^j_q] = \emptyset$ as well as $[T^i_q] \cap [T_{j_0} \restr n_q] = \emptyset$ for all $i \neq j \in F_p$.
\end{proposition}
\begin{proof}
Choose $m > n_p$ so that $f \restr m \neq g \restr m$ for all $f \neq g \in H_p$ and $f \restr m \notin T_{j_0}$ for all $f \in H_p$. By  Proposition~\ref{prop2.9} extend $p$ to $q_0$ with $\dom(q_0) = F_p \times \finseqleqomegaarg{m - 1}$.
	
For every $i \in F_p$ and $s \in [T^i_{q_0}]$ the set
$$K^i_s = \set{k < \omega}{s \concat k \in T_{j_0} \text{ or } s \concat k = f \restr m \text{ for some } f\in H_p \text{ with } w_p(f) \neq i}$$
 is finite, so there are pairwise different $\set{k^i_s < \omega}{i \in F_p, s \in [T^i_{q_0}]}$ such that $k^i_s \notin K^i_s$.
	
We define $q$ for $i \in F_p$ and $t \in \finseqleqomegaarg{m}$ by
	$$
	q(i)(t) =
	\begin{cases}
	q_0(i)(t) & \text{if } t \in \finseqleqomegaarg{m-1} \\
	1 & \text{if } t = s \concat k \text{ with } s \in [T^{i}_{q_0}] \text{ and } k = k^i_{s}\\
	1 & \text{if } t = f \restr m \text{ with } f \in H_p \text{ with } w_p(f) = i\\
	0 & \text{otherwise}
	\end{cases}
	$$
Then $T^{i}_q$ is a finitely splitting $m$-tree for every $i \in F_p$, $\dom(q) = F_p \times \finseqleqomegaarg{m}$, $(q, w_p) \in \TT(\T)$ and $(q, w_p) \extends (p, w_p)$. It remains to show that $(q, w_p)$ has the desired properties, so let $i \neq j \in F_p$.
	
Let $s \concat k \in [T^i_q] \cap [T^j_q]$. Assume $k = k^j_s$.
Then $k^i_s \neq k^j_s = k$, so choose $f \in H_p$ with $w_p(f) = i$ and $f \restr m = s \concat k$. Then $k \in K^j_s$, which contradicts $k = k^j_s \notin K^j_s$. The case $k = k^i_s$ follows analogously, so assume $k^i_s \neq k \neq k^j_s$. Choose $f,g \in H_p$ with $w_p(f) = i, w_p(g) = j$ and $f \restr m = s \concat k = g \restr m$. This contradicts the choice of $m$.
	
Finally, let $s \concat k \in [T^i_q] \cap [T_{j_0} \restr m]$. Then $k \in K^i_s$, so $k \neq k^i_s$. Now, choose $f \in H_p$ with $w_p(f) = i$ and $f \restr m = s \concat k$. Again, this contradicts the choice of $m$.
\end{proof}

\begin{lemma}\label{generic_adfs}
	Let $\T = \set{T_j}{j \in J}$ be an a.d.f.s.\ family and let $G$ be $\TT(\T)$-generic.
	In $V[G]$ we have that $\T \cup \S_{G}$ is an a.d.f.s.\ family.
\end{lemma}

\begin{proof}
Let $i \in \omega$. First, we show that $S_{G, i}$ is a finitely splitting tree in $V[G]$. Let $t \in S_{G,i}$ and $s \subseteq t$.
Then we can choose $(p, w_p) \in G$ with $p(i)(t) = 1$. But $T^i_p$ is an $n_p$-tree, so also $p(i)(s) = 1$, i.e.\ $s \in S_{G_i}$ which shows that $S_{G_i}$ is a tree.
	
Next, we show that $S_{G,i}$ is finitely splitting, so let $s \in S_{G_i}$. Again, choose $(p, w_p) \in G$ with $p(i)(s) = 1$.
By Proposition~\ref{prop2.9} there is $(q, w_p) \in G$ with $(q, w_p) \extends (p, w_p)$ and $\dom(q) = F_p \times \finseqleqomegaarg{n_q}$ for $n_q > |s|$. But then $T^{i}_q$ is an $n_q$-tree and $s \in T^{i}_q \setminus [T^i_q]$, which implies that $\suc_{S_{G,i}}(s) = \suc_{T^i_q}(s)$, so $S_{G,i}$ is finitely splitting in $V[G]$.
	
Let $i_0 \neq j_0 \in \omega$. We show that $S_{G,i_0}$ and $S_{G,j_0}$ are almost disjoint in $V[G]$. Let $(p, w_p) \in \TT(\T)$.
By Proposition~\ref{prop2.8} there is $q$ extending $p$ such that $\dom(q) = (F_p \cup \simpleset{i_0, j_0}) \times \finseqleqomegaarg{n_p}$, so that $(q, w_p) \in \TT(\T)$ and $(q, w_p) \extends (p, w_p)$. By Proposition~\ref{prop2.10} there is an extension  $r$ of $q$ with $\dom(r) = (F_p \cup \simpleset{i_0,j_0}) \times \finseqleqomegaarg{n_r}$, so that $(r, w_p) \in \TT(\T)$, $(r, w_p) \extends (q, w_p)$ and $[T^i_r] \cap [T^j_r] = \emptyset$ for all $i \neq j \in F_p \cup \simpleset{i_0, j_0}$. But then we have that
$$(r, w_p) \forces [S_{\gen,i_0} \restr n_r] \cap [S_{\gen,j_0} \restr n_r] = [T^{i_0}_r] \cap [T^{j_0}_r] = \emptyset$$ so the set of all conditions which force that $S_{\gen, i_0}$ and $S_{\gen, j_0}$ are almost disjoint is dense.
	
Finally, let $i_0 \in \omega$ and $j \in J$. We show that $S_{G,i_0}$ and $T_j$ are almost disjoint in $V[G]$. Consider $(p, w_p) \in \TT(\T)$.
By Proposition~\ref{prop2.8} there is an extension $q$ of $p$ with $\dom(q) = (F_p \cup \simpleset{i_0}) \times \finseqleqomegaarg{n_p}$, so that $(q, w_p) \in \TT(\T)$ and $(q, w_p) \extends (p, w_p)$.
By Proposition~\ref{prop2.10} we can extend $q$ to $r$ with $\dom(r) = (F_p \cup \simpleset{i_0}) \times \finseqleqomegaarg{n_r}$, so that $(r, w_p) \in \TT(\T)$ and $(r, w_p) \extends (q, w_p)$ and $[T^i_r] \cap [T_j \restr n_r] = \emptyset$ for all $i \in F_p \cup \simpleset{i_0}$.
But then 
$$(r, w_p) \forces [S_{\gen,i_0} \restr n_r] \cap [T_j \restr n_r] = [T^{i_0}_r] \cap [T_j \restr n_r] = \emptyset$$
so the set of all conditions which force that $S_{\gen, i_0}$ and $T_j$ are almost disjoint is dense.
\end{proof}

\begin{proposition}\label{diag}
	Let $\T$ be an a.d.f.s.\ family and let $G$ be $\TT(\T)$-generic.
	Then in $V[G]$ we have that  $\S_G$ diagonalizes $\T$.
\end{proposition}

\begin{proof}
	Let $(p, w_p) \in \TT(\T)$ and $f \in \W(\T)$.
	If $f \notin H_p$ we can choose an $i \in \omega \setminus F_p$ and consider $(q,w_q)=(p, w_p \cup (f, i)) \extends (p, w_p)$. 
	Then, however, $f\in H_q$ and $(q, w_q) \forces ``f \in [S_{\gen, w_q(f)}]"$.
\end{proof}

Using this diagonalization property we obtain a maximal a.d.f.s.\ family as follows:

\begin{theorem} \label{key}
Let $\kappa$ be a cardinal of uncountable cofinality.
Let $\seq{\TT_\alpha, \dot{\QQ}_\gamma}{\alpha \leq \kappa,\gamma<\kappa}$ be the finite support iteration, where $\dot{\QQ}_\alpha = \TT(\dot{\T}_\alpha)$ and $\dot{\T}_\alpha$ is a $\TT_\alpha$-name for $\bigcup_{\beta < \alpha}\S^\beta_{\gen_{\beta + 1} / \gen_{\beta}}$.
Let $G$ be $\TT_\kappa$-generic. Then $\T_\kappa = \bigcup_{\alpha < \kappa} \S^\alpha_G$ is a maximal a.d.f.s.\ family.
\end{theorem}

\begin{proof}\label{iterateformaxadfs}
By Lemma~\ref{generic_adfs}, $\T_\kappa$ is an a.d.f.s.\ family in $V[G]$.
Assume there was a finitely splitting tree $T$ almost disjoint from $\T_\kappa$. As $\TT_\kappa$ is c.c.c.\ we can choose $\alpha < \kappa$ such that $T \in V[G_\alpha]$, where $G_\alpha = G \cap \TT_\alpha$ is a $\TT_\alpha$-generic filter. By assumption $T \in \I^+(\T_\alpha)$, so by Proposition~\ref{diag} in $V[G_{\alpha + 1}]$ there is an $i < \omega$ such that $T$ has non-empty intersection with $S^{\alpha + 1}_{G_{\alpha + 1} / G_\alpha, i}$, which is a contradiction.
\end{proof}

In particular we obtain the following consistency result:
\begin{theorem}\label{thm.aT.c.arbitrary}
	Assume CH and let $\kappa \leq \lambda$ be regular cardinals.
	Then there is c.c.c.\ extension in which  $\d = \aT = \kappa \leq \lambda = \c$ holds.
\end{theorem}

\begin{proof}
	Use $\CC_\lambda \forcingconcat \dot{\TT}_\kappa$, where $\CC_\lambda$ is $\lambda$-Cohen forcing and $\dot{\TT}_\kappa$ is the forcing from the previous theorem.
	In the generic extension clearly $\lambda = \c$ holds and we have $\aT \leq \kappa$ by the previous theorem.
	Further, $\d \geq \kappa$, because $\dot{\TT}_\kappa$ adds Cohen reals cofinally often, which follows either from the fact the we use finite support and add Cohen reals at limit steps of countable cofinality or from fact that already $\TT(\T)$ adds Cohen reals, which we will prove at the end of this section.
\end{proof}

We can also add many maximal a.d.f.s.\ families at the same time with our forcing.
We use an analogous construction as Fischer and Shelah in \cite{fischershelah}, where they add many maximal independent families at the same time.

\begin{theorem}\label{manyadfs}
	Assume GCH.
	Let $\lambda$ be a cardinal of uncountable cofinality, $\theta \leq \lambda$ a regular cardinal and $\seq{\kappa_\beta}{\beta < \theta}$ a sequence of regular uncountable cardinals with $\cof(\lambda) \leq \kappa_\beta \leq \lambda$ for all $\beta < \theta$.
	Then there is a c.c.c.\ extension in which $\c = \lambda$ and $\kappa_\beta \in \spec(\aT)$ for all $\beta < \theta$.
\end{theorem}

\begin{proof}
As $\cof(\lambda) \leq \kappa_\beta$ for all $\beta < \theta$ we may choose a partition of (possibly a subset of) $\lambda$ into $\theta$-many disjoint sets $\seq{I_\beta}{\beta < \theta}$ such that $|I_\beta| = \kappa_\beta$ and $I_\beta$ is cofinal in $\lambda$ for all $\beta < \theta$. We define a finite support iteration $\seq{\PP_\alpha, \dot{\QQ}_\alpha}{\alpha < \lambda}$ of c.c.c.\ forcings as follows: We will iteratively add $\theta$-many a.d.f.s.\ families $\seq{\T_\beta}{\beta < \theta}$. Denote with $\dot{\T}^\alpha_\beta$ the name for the $\beta$-th family after iteration step $\alpha$.
Initially we let $\dot{\T}^0_\beta$ be a name for the empty set for all $\beta < \theta$. Now, assume that $\PP_\alpha$ and $\dot{\T}^\alpha_\beta$ have been defined for all $\beta < \theta$. If there is no $\beta <  \theta$ such that $\alpha \in I_\beta$ let $\dot{\QQ}_\alpha$ be a name for the trivial forcing. Otherwise, let $\beta_\alpha < \theta$ be the unique index such that $\alpha \in I_{\beta_\alpha}$ and let $\dot{\QQ}_\alpha$ be a name for $\TT(\dot{\T}^\alpha_{\beta_\alpha})$. Furthermore, for $\beta \neq \beta_\alpha$ let $\dot{\T}^{\alpha + 1}_\beta$ be a name for the same family as $\dot{\T}^{\alpha}_\beta$ and for $\beta_\alpha$ let $\dot{\T}^{\alpha + 1}_\beta$ be a name for $\dot{\T}^{\alpha}_\beta \cup \S_\gen$.
	
Let $G$ be $\PP_\lambda$-generic. As $I_\beta$ is cofinal in $\lambda$ for all $\beta < \theta$ and by the results of the previous section we obtain in $V[G]$ that $\T_\beta$ is a maximal a.d.f.s.\ family for all $\beta < \theta$. Furthermore, as $|I_\beta| = \kappa_\beta$ and every iteration step extends at most one family by $\omega$-many new trees we obtain that $\T_\beta$ has size $\kappa_\beta$ for all $\beta < \theta$. But this shows that $\kappa_\beta \in \spec(\aT)$. Finally, GCH, the c.c.c.-ness of all $\PP_\alpha$ and $\cof(\lambda) > \aleph_0$ imply that $\c = \lambda$ by counting nice names.
\end{proof}

\begin{proposition} \label{cohen}
	$\TT(\T)$ adds Cohen reals.
\end{proposition}

\begin{proof}
	Let $G$ be $\TT(\T)$-generic.
	Define a real $c:\omega \to 2$ by $c(i) = 1$ iff $\simpleseq{0} \in S_{G, i}$.
	We show that $c$ defines a Cohen real over $V$.
	In $V$ let $D \subseteq \CC$ be dense and $(p, w_p) \in \TT(\T)$.
	By Propositions \ref{prop2.8} and \ref{prop2.9} we may assume that $n_p > 0$ and $F_p = [0, N]$ and $\ran(H_p) \subseteq F_p$ for some $N < \omega$.
	
	Define $s(i) = 1$ iff $\simpleseq{0} \in T_p^i$ for $i \leq N$.
	By definition of $c$ we get	$(p, w_p) \forces ``s \subseteq \dot{c}"$.
	By density of $D$ choose $t \in D$ such that $s \subseteq t$.
	Let $S,T$ be any $n_p$-trees, such that $\simpleseq{0} \in S$ and $\simpleseq{0} \notin T$.
	Then we extend $(p, w_p)$ to $(q,w_p)$ with $\dom(q) = |t| \times \finseqleqomegaarg{n_p}$ by
	$$
	T^i_q =
	\begin{cases}
	T^i_p & \text{if } i \leq N \\
	S & \text{if } i > N \text{ and } t(i) = 1 \\
	T & \text{otherwise}
	\end{cases}
	$$
	But then we get that $(q, w_p) \forces ``t \subseteq \dot{c}"$, where $t \in D$.
\end{proof}

\begin{remark} \label{unbounded}
Let $\T$ be an a.d.f.s.\ family. Apart from the Cohen reals above, there is also a second kind of unbounded reals added by $\TT(\T)$. Let $G$ be a $V[G]$-generic filter. For $i_0 < \omega$ in $V[G]$ define
$$f_{i_0}(n) = \max\set{k < \omega}{\text{there is an } s \in S_{G,i_0} \text{ with } s(n) = k}$$ 
To show that $f_{i_0}$ is unbounded over $V$, consider any $g \in \infseqomega \cap V$, $n_0 < \omega$ and $(p, w_p) \in \TT(\T)$.
By Propositions \ref{prop2.8} and \ref{prop2.9} we may assume that $i_0 \in F_p$ and $n_0 \leq n_p$. As before, given $i \in F_p$ we can choose an ($n_p + 1$)-tree $T_i$ which extends $T^i_p$ and contains $f \restr (n_p + 1)$ for all $f \in H_p$ with $w_p(f) = i$, but for $i_0$ additionally also contains an $s \in T_{i_0}$ such that $s(n_p) = g(n_p) + 1$.
Define $q:F_p \times \finseqleqomegaarg{n_p + 1}$ by $T^i_q = T_i$. Then we have $(q, w_p) \extends (p, w_p)$ and $(q, w_p) \forces g(n_p) < s(n_p) \leq f_{i_0}(n)$.
\end{remark}

Finally, we show that $\TT(\T)$ can both add and not add dominating reals, depending on the properties of $\T$.
Note that the following characterization is sufficient but not necessary.

\begin{proposition} \label{dominating}
Let $\T$ be an a.d.f.s.\ family.  If $\I^+(\T)$ is a dominating family then $\TT(\T)$ adds a dominating real. In particular $\TT(\emptyset)$ adds dominating reals.
\end{proposition}

\begin{proof}
	Assume $\I^+(\T)$ is dominating.
	Let $G$ be $\TT(\T)$-generic.
	In $V[G]$ choose $f$ such that $f_i \leq^* f$, where $f_i$ is defined as in the previous remark.
	We claim that $f$ is dominating, so in $V$ let $g \in \infseqomega$ and $(p, w_p) \in \TT(\T)$.
	Choose $h \in \I^+(\T)$ and $N < \omega$ such that for all $n \geq N$ we have $g(n) \leq h(n)$.
	By possibly extending $(p, w_p)$ we may assume that $h \in H_p$.
	Let $i = w_p(h)$ and choose $(q, w_q) \extends (p, w_p)$ and $M < \omega$ with $N \leq M < \omega$ such that
	$$
	(q, w_q) \forces \text{For all } n \geq M \text{ we have } f_i(n) \leq f(n)
	$$
	But then $(q, w_q) \forces g \leq^* f$ as $w_p(h) = i$ implies that
\[		(q, w_q) \forces \text{For all } n \geq M \text{ we have } g(n) \leq h(n) \leq f_i(n) \leq f(n)
\]
\end{proof}

\begin{remark}
	On the other hand, if $\T$ is a maximal a.d.f.s.\ family, we have $\TT(\T) \cong \CC$, so that $\TT(\T)$ does not add dominating reals.
\end{remark}

\begin{question}
	Is there a nice combinatorial characterization of those families $\T$ for which $\TT(\T)$ adds a dominating real?
\end{question}

\section{A Sacks-indestructible maximal a.d.f.s.\ family}

In the last section we have seen that $\spec(\aT)$ can be arbitrarily large, in fact it may be a superset of any set of regular uncountable cardinals.
Contrarily, in this section we will prove that CH implies the existence of a maximal a.d.f.s.\ family which stays maximal even after forcing with countably supported iteration or product of Sacks forcing of any length.
Together with an ``isomorphism of names"-argument we will then that the Sacks-product extension satisfies $\spec(\aT) = \simpleset{\aleph_1, \c}$ where $\c$ may be any cardinal of uncountable cofinality.
Thus, consistently $\aT = \aleph_1$ and $\spec(\aT)$ is minimal.

We begin with a brief reminder of the definitions of Sacks forcing and countably supported Sacks forcing and their fusion sequences.

\begin{definition}
	Let $T \subseteq 2^{<\omega}$ be a tree.
	\begin{enumerate}
		\item Let $s \in T$ then $T_s = \set{t \in T}{s \subseteq t \text{ or } t \subseteq s}$.
		\item $\spl(T) = \set{s \in T}{s \concat 0 \in T \text{ and } s \concat 1 \in T}$ is the set of all splitting nodes of $T$.
		\item $T$ is perfect iff for all $s \in T$ there is $t \in \spl(T)$ such that $s \subseteq t$.
		\item $\SS = \set{T \subseteq 2^{<\omega}}{T\text{ is a perfect tree}}$ ordered by inclusion is Sacks forcing.
	\end{enumerate}
\end{definition}

\begin{definition}
	Let $T \in \SS$.
	We define the fusion ordering for Sacks forcing as follows:
	\begin{enumerate}
		\item Let $s \in T$ then $\sucspl_T(s)$ is the unique minimal splitting node in $T$ extending $s$.
		\item $\stem(T) = \sucspl_T(\emptyset)$.
		\item $\spl_0 = \simpleset{\stem(T)}$ and if $\spl_n(T)$ is defined for $n < \omega$ we set
		$$
		\spl_{n + 1} = \set{\sucspl_T(s \concat i)}{s \in \spl_n(T), i \in 2}
		$$
		$\spl_n(T)$ is called the $n$-th splitting level of $T$.
		\item Let $n < \omega$ and $S, T \in \SS$.
		We write $S \extends_n T$ iff $S \extends T$ and $\spl_n(S) = \spl_n(T)$.
	\end{enumerate}
\end{definition}

The following lemmata are well-known, see \cite{kanamori} for example. We add them for completeness.

\begin{lemma}
	Let $\seq{T_n \in \SS}{n < \omega}$ be a sequence of trees such that $T_{n + 1} \extends_n T_n$ for all $n < \omega$.
	Then $T = \bigcap_{n < \omega}T_n \in \SS$ and $T \extends_n T_n$ for all $n < \omega$.
\end{lemma}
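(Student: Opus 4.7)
The plan is to first establish that preserving the $m$-th splitting level forces all lower splitting levels to be preserved, and then apply this stability to extract a coherent splitting skeleton inside the intersection $T$. The key sublemma I would prove is: if $T' \extends T$ are perfect trees with $\spl_m(T') = \spl_m(T)$, then $\spl_k(T') = \spl_k(T)$ for every $k \leq m$. I would argue this by downward induction on $k$. Given $\spl_{k+1}(T') = \spl_{k+1}(T)$, for each $s \in \spl_k(T)$ both successors $\sucspl_T(s \concat 0)$ and $\sucspl_T(s \concat 1)$ lie in $\spl_{k+1}(T') \subseteq T'$, so prefix-closure forces $s \concat 0, s \concat 1 \in T'$, making $s$ a splitting node of $T'$. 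Since any splitting node of $T'$ is automatically a splitting node of $T$ (as $T' \extends T$), a mutual inclusion yields $\sucspl_{T'}(s \concat i) = \sucspl_T(s \concat i)$ for $i \in 2$; this positions $s$ as the parent in the splitting tree of $T'$ of two nodes in $\spl_{k+1}(T')$, so $s \in \spl_k(T')$. The cardinality count $|\spl_k(T)| = |\spl_k(T')| = 2^k$ upgrades the inclusion to equality.

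Iterating the sublemma along $T_{m+1} \extends_m T_m$ yields $\spl_k(T_{m'}) = \spl_k(T_m)$ whenever $k \leq m \leq m'$, so $\S_k := \spl_k(T_m)$ is well-defined for all $m \geq k$. Every $s \in \S_k$ lies in each $T_m$ — for $m \geq k$ by definition, for $m < k$ via $T_m \supseteq T_k$ — hence $\S_k \subseteq T$. Moreover, the finite linear path in $T_m$ (for large $m$) from $s \in \S_k$ to its two immediate splitting successors $\sucspl_{T_m}(s \concat i) \in \S_{k+1}$ consists only of prefixes of these successors; prefix-closure of the $T_{m'}$ then pushes each such path into every $T_{m'}$ containing the endpoints, and thus into $T$. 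In particular, the full splitting skeleton of $T$ already sits inside $T$.

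To see $T$ is perfect, I note that every $t \in \spl_k(T_n)$ has $|t| \geq k$, so an arbitrary $s \in T$ with $k > |s|$ cannot extend any node of $\S_k$. By the standard perfect-tree dichotomy applied inside $T_n$ (for any $n \geq k$), the node $s$ must then be a prefix of some $t \in \S_k \subseteq T$, which is a splitting node of $T$ by the preceding paragraph. Finally, $\spl_n(T) = \S_n = \spl_n(T_n)$ follows by induction on $n$: the base case reduces to $\stem(T) = \stem(T_0)$, which holds because the common trunk below $\stem(T_0)$ is forced into every $T_n$; the inductive step again uses a mutual inclusion $\sucspl_T(s \concat i) = \sucspl_{T_{n+1}}(s \concat i)$ for $s \in \S_n$, combined with the induction hypothesis. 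The main technical obstacle is the $\sucspl$ identification itself: the inclusion $\sucspl_{T'}(s \concat i) \supseteq \sucspl_T(s \concat i)$ is automatic from splitting nodes of $T'$ being splitting nodes of $T$, but the reverse direction must be extracted from the hypothesis $\spl_{k+1}(T') = \spl_{k+1}(T)$ rather than from any a priori containment.
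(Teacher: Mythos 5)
Your proof is correct and follows essentially the same route as the paper's: both arguments rest on the stabilization of the splitting levels $\spl_k(T_m)$ for $m \geq k$ and on the observation that these stabilized splitting nodes remain splitting nodes of the intersection, which yields perfectness. You additionally make explicit two points the paper leaves implicit --- the coherence sublemma that $\extends_m$ implies $\extends_k$ for $k \leq m$, and the verification that $T \extends_n T_n$ --- but this is added rigor rather than a different approach.
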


\begin{proof}
	The only non-trivial property to verify is that $T \in \SS$, so let $s \in T$.
	So $s \in T_0$ and let $t = \sucspl_{T_0}(s)$ and $n < \omega$ such that $t \in \spl_n(T_0)$.
	As $T_n \extends T_0$ choose $u \in \spl_n(T_n)$ such that $t \subseteq u$.
	But then $s \subseteq u$ and by definition of $\leq_n$ we get that $u \in \spl(T)$.
\end{proof}

\begin{definition}
	Let $\lambda$ be a cardinal.
	$\SS^\lambda$ is the countable support product of Sacks forcing of size $\lambda$. Moreover,
	\begin{enumerate}
		\item for $A \subseteq \SS^\lambda$ let $\bigcap A$ be the function with $\dom(\bigcap A) = \bigcup_{p \in A} \dom(p)$ and for all $\alpha < \lambda$ we have $(\bigcap A)(\alpha) = \bigcap_{p \in A} p(\alpha)$.
		Notice that we do not necessarily have $\bigcap A \in \SS^\lambda$.
		\item Let $n < \omega$, $p, q \in \SS^\lambda$ and $F \in \finsubset{\dom(q)}$.
		Write $p \extends_{F, n} q$ iff $p \extends q$ and $p(\alpha) \extends_n q(\alpha)$ for all $\alpha \in F$.
	\end{enumerate}
\end{definition}

\begin{lemma}
	Let $\seq{p_n \in \SS^\lambda}{n < \omega}$ and $\seq{F_n}{n < \omega}$ be sequences such that
	\begin{enumerate}
		\item $p_{n + 1} \extends_{F_n, n} p_n$ for all $n < \omega$
		\item $F_{n} \subseteq F_{n + 1}$ for all $n < \omega$ and $\bigcup_{n < \omega} F_n = \bigcup_{n < \omega} \dom(p_n)$
	\end{enumerate}
	Then $p = \bigcap_{n < \omega} p_n \in \SS^\lambda$ and $p \extends_{F_n, n} p_n$ for all $n < \omega$.
\end{lemma}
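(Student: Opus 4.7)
The plan is to reduce the product-level statement to the coordinate-wise fusion Lemma~\ref{LEM_Fusion} by working one coordinate at a time. First, by definition of $\bigcap$ the domain of $p$ equals $\bigcup_{n<\omega}\dom(p_n)$, which as a countable union of countable sets is itself countable. So $p$ automatically has countable support, and the only remaining task is to verify $p(\alpha) \in \SS$ for each $\alpha$ and to track splitting levels.

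Fix $\alpha \in \dom(p)$ and let $n_0$ be minimal with $\alpha \in \dom(p_{n_0})$. By hypothesis (2) we have $\alpha \in \bigcup_n F_n$, and the nestedness of $(F_n)$ (also from hypothesis (2)) then gives some $m \geq n_0$ for which $\alpha \in F_n$ for every $n \geq m$. For such $n$ the condition $p_{n+1} \extends_{F_n, n} p_n$ specialises at the $\alpha$-coordinate to $p_{n+1}(\alpha) \extends_n p_n(\alpha)$. Setting $T_k := p_{m+k}(\alpha)$, we obtain $T_{k+1} \extends_{m+k} T_k$; since preserving the $(m+k)$-th splitting level forces the lower splitting levels to agree, this implies $T_{k+1} \extends_k T_k$. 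Lemma~\ref{LEM_Fusion} therefore produces $\bigcap_{k<\omega} T_k \in \SS$ with $\bigcap_{k<\omega} T_k \extends_k T_k$ for every $k$. As $(p_n(\alpha))_{n \geq n_0}$ is $\extends$-decreasing, its full intersection coincides with that of any cofinal subsequence, so $p(\alpha) = \bigcap_{k<\omega} T_k \in \SS$, as needed.

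For $p \extends_{F_n, n} p_n$ with fixed $n$: the inclusion $\dom(p) \supseteq \dom(p_n)$ is immediate, and $p(\alpha) \subseteq p_n(\alpha)$ for $\alpha \in \dom(p_n)$ follows from the monotonicity of the tail $(p_k(\alpha))_{k \geq n}$. For $\alpha \in F_n$ we rerun the previous argument with $m := n$; Lemma~\ref{LEM_Fusion} combined with transitivity of $\extends_n$ and the cascading of splitting-level preservation then delivers $p(\alpha) \extends_n p_n(\alpha)$.

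The main subtlety is the bookkeeping: a coordinate $\alpha$ may enter $\dom(p_k)$ strictly before it enters any $F_k$, so splitting-level preservation at $\alpha$ only begins at some $m = m(\alpha)$ rather than at $0$, whereas Lemma~\ref{LEM_Fusion} expects $T_{k+1} \extends_k T_k$ from $k = 0$. The hypothesis $\bigcup_n F_n = \bigcup_n \dom(p_n)$ is precisely what lets us truncate the sequence at $m$, and the inequality $m + k \geq k$ guarantees that the truncation strengthens, rather than weakens, the fusion assumption.
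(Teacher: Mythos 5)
Your proof is correct and follows essentially the same route as the paper: reduce to the single-coordinate fusion Lemma~\ref{LEM_Fusion} by passing, for each $\alpha$, to the tail of the sequence starting where $\alpha$ enters the $F_n$'s. You are somewhat more explicit than the paper about the re-indexing and about why $p \extends_{F_n,n} p_n$ holds, but the underlying argument is the same.
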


\begin{proof}
	Again, we only need to verify that $p \in \SS^\lambda$.
	Clearly, $\dom(p) = \bigcup_{n < \omega} \dom(p_n)$ is countable.
	Let $\alpha \in \dom(p)$.
	Choose $N < \omega$ such that $\alpha \in F_N$.
	By assumption we get that $\seq{p_n(\alpha) \in \SS}{n \geq N}$ is a fusion sequence, so $p(\alpha) = \bigcap_{n < \omega} p_n(\alpha) \in \SS$ by the previous lemma.
\end{proof}

\begin{definition}
	Let $p \in \SS^\lambda$, $F \in \finsubset{\dom(p)}$, $n < \omega$ and $\sigma:F \to V$ be a suitable function for $p$, $F$ and $n$, i.e.\ for all $\alpha \in F$ there are $s \in \spl_n(p(\alpha))$ and $i \in 2$ with $\sigma(\alpha) = s \concat i$.
	Then we define $p \restr \sigma \in \SS^\lambda$ by
	$$
	(p \restr \sigma) (\alpha) = 
	\begin{cases}
	p(\alpha)_{\sigma(\alpha)} & \text{if } \alpha \in F \\
	p(\alpha) & \text{otherwise}
	\end{cases}
	$$
	Notice that for fixed $p \in \SS^\lambda$, $n < \omega$ and $F \in \finsubset{\dom(p)}$ there are only finitely many $\sigma$ which are suitable for $p$, $F$ and $n$.
	Also, if $q \extends_{F, n} p$, then $q$ and $p$ have the same suitable functions for $F$ and $n$.
	Furthermore, the set
	$$
	\set{p \restr \sigma}{\sigma:F \to V \text{ is a suitable function for } p, F \text{ and } n}
	$$
	is a maximal antichain below $p$.
\end{definition}

\begin{lemma}\label{F_n_density}
	Let $p \in \SS^\lambda$, $D \subseteq \SS^\lambda$ be dense open below $p$, $n < \omega$ and $F \in \finsubset{\dom(p)}$.
	Then there is $q \extends_{F, n} p$ such that for all $\sigma$ suitable for $p$ (or equivalently $q$), $F$ and $n$ we have $q \restr \sigma \in D$.
\end{lemma}
\begin{proof}
Let $\seq{\sigma_i}{i < N}$ enumerate all suitable functions for $p$, $F$ and $n$. Set $q_0 = p$. We will define a $\extends_{F, n}$-decreasing sequence $\seq{q_i}{i \leq N}$ so that all of the $q_i$ have the same suitable functions as $p$ for $F$ and $n$.
Assume $i < N$ and $q_i$ is defined. Choose $r_i \extends q_i \restr \sigma_i$ in $D$ and define
	$$
	q_{i + 1}(\alpha) =
	\begin{cases}
	r_i(\alpha) \cup \bigcup \set{q_i(\alpha)_s}{s = t \concat i \text{ for } t \in \spl_n(p), i \in 2 \text{ and } s \neq \sigma(\alpha)} & \text{if } \alpha \in F \\
	r_i(\alpha) & \text{otherwise}
	\end{cases}
	$$
Clearly, $q_{i + 1} \extends_{F, n} q_i$ and $q_{i + 1} \restr \sigma = r_i$. Now, set $q = q_N$ and let $\sigma$ be suitable for $p$, $F$ and $n$. Choose $i < N$ such that $\sigma = \sigma_i$. Then we have $q \restr \sigma \extends q_{i+1} \restr \sigma = r_i \in D$, so $q \restr \sigma \in D$ as $D$ is open.
\end{proof}

By routine fusion arguments one obtains that both $\SS$ and $\SS^\lambda$ are proper and $\infseqomega$-bounding.
Here, we will need the following key lemma:

\begin{lemma} \label{extendctbladfs}
Let $\T$ be a countable a.d.f.s.\ family, $\lambda$ be a cardinal, $p \in \SS^\lambda$ and $\dot{f}$ be a $\SS^\lambda$-name for a real such that for all $T \in \T$ we have
$$p \forces \dot{f} \notin [T].$$
Then there is a finitely splitting tree $S$ and $q \extends p$ such that $\T \cup \simpleset{S}$ is an a.d.f.s.\ family and
$$q \forces \dot{f} \in [S].$$
\end{lemma}
\begin{proof}
Enumerate $\T = \set{T_n}{n < \omega}$. By assumption for every $n < \omega$ the set
$$D_n = \set{r \in \SS^\lambda}{\text{There is a } k <\omega \text{ such that } r \forces \dot{f} \restr k \notin T_n}$$
is open dense below $p$. Set $q_0 = p$. We define a fusion sequence $\seq{q_n}{n < \omega}$ as follows. Fix with a suitable bookkeeping argument a sequence $\seq{F_n \in \finsubset{\dom(q_n)}}{n < \omega}$ such that that $\bigcup_{n < \omega} F_n = \bigcup_{n < \omega} \dom(q_n)$. Now, let $n < \omega$ and assume $q_n$ has been defined, and $F_n$ is given. Apply Lemma~\ref{F_n_density} to $D_n$, $q_n$, $n$ and $F_n$ to obtain $q_{n + 1} \extends_{F_n,n} q_n$ such that for all suitable functions $\sigma$ for $q_n$, $F_n$ and $n$ there is a $k_\sigma < \omega$ such that
$$q_{n + 1} \restr \sigma \forces \dot{f} \restr k_\sigma \notin T_n.
$$
Set $k_n = \max\set{k_\sigma}{\sigma \text{ is a suitable function for } q_n, F_n \text{ and } n}$. Then we have
$$
q_{n + 1} \forces \dot{f} \restr k_n \notin T_n.
$$
Let $q_\omega = \bigcap_{n < \omega} q_n$. Finally, $\SS^\lambda$ is $\infseqomega$-bounding, so we may choose $q \extends q_\omega$ such that
$$S = \set{s \in \finseqomega}{\text{there is an } r \extends q \text{ with } r \forces s \subseteq \dot{f}}$$
is a finitely splitting tree.
	Clearly, $q \forces \dot{f} \in [S]$ by definition of $S$.
	Furthermore, for $n < \omega$ we have that $S$ and $T_n$ are almost disjoint, since $q \extends q_{n + 1}$, $q_{n + 1} \forces \dot{f} \restr k_n \notin T_n$ and the definition of $S$ imply
$$q \forces [T \restr k_n] \cap [S \restr k_n] = \emptyset.$$
\end{proof}

Finally, we will need that $\SS^\lambda$ satisfies a nice version of continuous reading of names as proved in \cite{sacksmed}.
To state this version of continuous reading we summarize the most important definitions with minor tweaks for simplicity.
First, we modify the presentation of~\cite[Definition 2.5 and Proposition 2.6]{kechris} to code continuous functions $f^*:{^\omega (^\omega 2)} \to \infseqomega$:

\begin{definition}\
	\begin{enumerate}
		\item For $s, t \in {^{<\omega}(^{<\omega}2)}$ write $s \trianglelefteq t$ iff $\dom(s) \subseteq \dom(t)$ and for all $n \in \dom(s)$, $s(n) \subseteq t(n)$.
		\item A function $f:{^{<\omega}(^{<\omega}2)} \to \finseqomega$ is monotone if for all $s \trianglelefteq t \in {^{<\omega}(^{<\omega}2)}$, $f(s) \subseteq f(t)$.
		\item A function $f:{^{<\omega}(^{<\omega}2)} \to \finseqomega$ is proper iff for all $x \in {^\omega (^\omega 2)}$:
		$$
		|\dom(f(x \restr n \times n))| \overset{n \to \infty}{\longrightarrow} \infty.
		$$
		\item For a monotone, proper function $f:{^{<\omega}(^{<\omega}2)} \to \finseqomega$ define a continuous function
		$$
		 f^*:{^\omega (^\omega 2)} \to \infseqomega\hbox{ via }
		f^*(x) = \bigcup_{n < \omega} f(x \restr n \times n).
		$$
		In this case $f$ is called a code for $f^*$.
	\end{enumerate}
\end{definition}

\begin{remark}
	Conversely, for every continuous function $f^*:{^\omega (^\omega 2)} \to \infseqomega$ there is a code for it.
	In fact, this is true in general for continuous functions between any two effective Polish spaces.
\end{remark}

\begin{remark}
	For all $p, q \in \SS$ there is a natural bijection $\pi:\spl(p) \to \spl(q)$ which for every $n < \omega$ restricts to bijections $\pi \restr \spl_n(p): \spl_n(p) \to \spl_n(q)$ and which preserves the lexicographical ordering.
	We can extend it to a monotone and proper function $\pi:p \to q$ in a similar sense as above.
	$\pi$ then codes a homeomorphism $\pi:[p] \to [q]$ which we call the induced homeomorphism.
\end{remark}

\begin{definition}
	Let $\PP$ be the countable support iteration of Sacks forcing of length $\lambda$.
	Let $p \in \PP$ and assume that $|\dom(p)| = \omega$ and $0 \in \dom(p)$.
	Note that this assumption is clearly dense in $\PP$.
	\begin{enumerate}
		\item A standard enumeration of $\dom(p)$ is a sequence
		$$
		\Sigma = \seq{\sigma_k}{k < \omega}
		$$
		such that $\sigma_0 = 0$ and $\ran(\Sigma) = \dom(p)$.
		\item Let $[p]$ be a $\PP$-name such that
		$$
		\PP \forces [p] = \seq{x \in {^{\dom(p)}}({^\omega} 2)}{\text{For all } \alpha \in \dom(p) \text{ we have } x(\alpha) \in [p(\alpha)]}
		$$
		\item Let $\Sigma$ be a standard enumeration of $\dom(p)$.
		For $k < \omega$ let $\dot{e}_k^{p, \Sigma}$ be a $\PP \restr \sigma_k$-name such that
		$$
		\PP \restr \sigma_k \forces \dot{e}_k^{p, \Sigma} \text{ is the induced homeomorphism between } [p(\sigma_k)] \text{ and } ^\omega 2
		$$
		Finally, let $\dot{e}^{p, \Sigma}$ be a $\PP$-name such that
		$$
		\PP \forces \dot{e}^{p, \Sigma}:[p] \to {^\omega(^\omega 2)} \text{ such that } \dot{e}^{p, \Sigma}(x) = \seq{\dot{e}^{p, \Sigma}_k(x(\sigma_k))}{k < \omega} \text{ for all } x \in [p]
		$$
	\end{enumerate}
\end{definition}

\begin{remark}
For the countable support product of Sacks forcing we define the analogous notions. In fact in this simpler case we do not have to define $[p]$, $\dot{e}^{p, \Sigma}_k$ and $\dot{e}^{p, \Sigma}$ as names.
\end{remark}

\begin{definition}
Let $\PP$ be the countable support iteration or product of Sacks forcing of any length.
	Let $q \in \PP$ and $\dot{f}$ be a $\PP$-name such that $q \forces ``\dot{f} \in \infseqomega"$.
	Let $\Sigma = \seq{\sigma_k}{k < \omega}$ be a standard enumeration of $\dom(q)$ and $f:{^{<\omega}(^{<\omega}2)} \to \finseqomega$ be a code for a continuous function $f^*:{^\omega (^\omega 2)} \to \infseqomega$ such that
	$$
	q \forces \dot{f} = (f^* \circ \dot{e}^{q, \Sigma})(s_\gen \restr \dom(q))
	$$
	where $s_\gen$ is the sequence of Sacks reals.
	Then we say $\dot{f}$ is read continuously below $q$ (by $f$ and $\Sigma$).
\end{definition}

\begin{lemma}[Lemma 4 of \cite{sacksmed}]
	Let $\PP$ be the countable support iteration or product of Sacks forcing of length $\lambda$.
	Suppose $p \in \PP$ and $\dot{f}$ is a $\PP$-name such that $p \forces ``\dot{f} \in \infseqomega"$.
	Then there is $q \extends p$ such that $\dot{f}$ is read continuously below $q$.
\end{lemma}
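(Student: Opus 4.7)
The plan is a fusion argument that refines $p$ down to some $q \extends p$ so that the value of $\dot{f}$ is recoverable continuously from the branches chosen through the coordinates of $q$. By a standard density argument in $\PP$, I may assume that $0 \in \dom(p)$ and that $\dom(p)$ is infinite, and then fix a standard enumeration $\Sigma = \seq{\sigma_k}{k < \omega}$ of $\dom(p)$.

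First, set $q_0 := p$ and $F_n := \set{\sigma_k}{k \leq n}$. I would recursively build a fusion sequence $\seq{q_n}{n < \omega}$ with $q_{n+1} \extends_{F_n, n} q_n$ as follows. The set $D_n := \set{r \in \PP}{r \text{ decides } \dot{f} \restr n}$ is dense open below $p$, so by Lemma~\ref{LEM_Dense_Open} applied to $D_n$, $q_n$, $F_n$, $n$ (and its countable support iteration analogue, proved by the same pattern), I obtain $q_{n+1} \extends_{F_n, n} q_n$ such that for every suitable function $\sigma$ for $q_n$, $F_n$, $n$ there is $t^n_\sigma \in \finseqeqomegaarg{n}$ with $q_{n+1} \restr \sigma \forces \dot{f} \restr n = t^n_\sigma$. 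Lemma~\ref{LEM_Fusion_Product} then yields $q := \bigcap_n q_n \in \PP$ with $q \extends_{F_n, n} q_n$ for every $n$; in particular $\spl_n(q(\sigma_k)) = \spl_n(q_n(\sigma_k))$ for all $k \leq n$, so the values $t^n_\sigma$ are naturally indexed by suitable functions of $q$ itself.

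Next, I would define the code $f:{^{<\omega}(^{<\omega}2)} \to \finseqomega$. Given $s \in {^{<\omega}(^{<\omega}2)}$, the inverses of the induced homeomorphisms $\dot{e}^{q, \Sigma}_k$ translate the entries $s(k)$ into partial splitting choices in $q(\sigma_k)$. Let $n(s)$ be the largest $n$ such that $s$ determines a complete suitable function $\sigma_s$ for $q$, $F_n$, $n$, and set $f(s) := t^{n(s)}_{\sigma_s}$ when $n(s)$ exists, and $f(s) := \emptyset$ otherwise. Monotonicity is immediate because $s \trianglelefteq s'$ gives $n(s) \leq n(s')$ with $\sigma_{s'}$ refining $\sigma_s$ when the inequality is strict, and stronger conditions force extensions of previously decided finite segments; properness holds because for any $x \in {^\omega}({^\omega}2)$ the splitting data resolved by $x \restr N \times N$ grows unboundedly in $N$, so $n(x \restr N \times N) \to \infty$ and hence $|f(x \restr N \times N)| \to \infty$.

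Finally, I would verify $q \forces \dot{f} = (f^* \circ \dot{e}^{q, \Sigma})(s_\gen \restr \dom(q))$: any generic $G \ni q$ picks out, for each $n$, the unique suitable function $\sigma^G_n$ with $q \restr \sigma^G_n \in G$, and $\dot{e}^{q, \Sigma}$ applied to the Sacks sequence encodes precisely this data, so $q \restr \sigma^G_n \extends q_{n+1} \restr \sigma^G_n \forces \dot{f} \restr n = t^n_{\sigma^G_n} = f^*(\dot{e}^{q, \Sigma}(s_\gen)) \restr n$. The main obstacle is the iteration case: there $\dot{e}^{q, \Sigma}_k$ is only a $\PP \restr \sigma_k$-name, so the code $f$ has to be built in the ground model while its computation at each stage interleaves with earlier iterates, which forces one to check that the values $t^n_\sigma$ assembled in $V$ still glue together to the intended continuous function after every $\PP \restr \sigma_k$-extension. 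This is precisely the delicate bookkeeping carried out in the proof of Lemma~4 of \cite{sacksmed}, and the same argument applies here.
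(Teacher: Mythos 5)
The paper itself gives no proof of this lemma: it is imported verbatim as Lemma~4 of \cite{sacksmed}, so there is no internal argument to compare yours against. On its own terms, your product-case argument is the standard fusion proof and is essentially right, but it has one concrete gap and one wholesale deferral.

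The gap: you fix $\Sigma$ as an enumeration of $\dom(p)$ and set $F_n = \set{\sigma_k}{k \leq n}$ in advance. But entering the dense set $D_n$ may enlarge the domain (the name $\dot{f}$ can involve conditions with coordinates outside $\dom(p)$), and Lemma~\ref{LEM_Fusion_Product} requires $\bigcup_n F_n = \bigcup_n \dom(q_n)$; with your fixed $F_n$'s the coordinates added along the way are never trapped, so $\bigcap_n q_n(\alpha)$ need not be perfect at such $\alpha$ and the fusion limit need not be a condition. Relatedly, continuous reading is defined relative to a standard enumeration of $\dom(q)$, not of $\dom(p)$. Both points are repaired by exactly the interleaved bookkeeping the paper uses in the proof of Lemma~\ref{LEM_Extend_Ctbl_ADFS}, enumerating new coordinates as they appear; with that change the remainder of your product argument (deciding $\dot{f} \restr n$ below each $q_{n+1} \restr \sigma$, assembling the code $f$ from the values $t^n_\sigma$ via the induced homeomorphisms, and the genericity check) goes through.

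The deferral: for the countable support iteration you assert that the analogue of Lemma~\ref{LEM_Dense_Open} is ``proved by the same pattern'' and that the assembly of the $t^n_\sigma$ is ``precisely the delicate bookkeeping carried out in \cite{sacksmed}.'' That is the one place where the iteration case genuinely diverges from the product case: there $q(\sigma_k)$, its splitting levels, and hence the suitable functions are only $\PP \restr \sigma_k$-names, so both the finite-splitting-configuration lemma and the definition of the code must be handled by recursion along the coordinates. As written, your text proves the product case (modulo the bookkeeping fix) and cites the iteration case --- which matches what the paper does, but is not a self-contained proof of the statement as formulated.
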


\begin{remark}
	For any $p \in \PP$ and $\PP$-name $\dot{f}$ such that $p \forces ``\dot{f} \in \infseqomega"$ it is easy to see that if $\dot{f}$ is read continuously below $p$ then for all $q \extends p$ also $\dot{f}$ is read continuously below $q$.
	Thus, the previous lemma shows that the set
	$$
	\set{q \in \PP}{\dot{f} \text{ is read continuously below } q}
	$$
	is dense open below $p$.
\end{remark}

\begin{theorem}\label{indestructible_adfs}[CH]
There is an a.d.f.s.\ family which remains maximal after forcing with countably supported iteration or product of Sacks forcing of arbitrary length.
\end{theorem}

\begin{proof}
By CH let $\seq{f_\alpha}{\alpha < \aleph_1}$ enumerate all codes $f_\alpha:{^{<\omega}(^{<\omega}2)} \to \finseqomega$ for continuous functions $f_\alpha^*:{^\omega (^\omega 2)} \to \infseqomega$. We define an increasing sequence $\seq{T_\alpha}{\alpha < \aleph_1}$ of a.d.f.s.\ families as follows:
	
Set $\T_0 = \emptyset$. Now, assume $\T_\alpha$ is defined and is countable. If for all $T \in \T_\alpha$ we have that
$$\SS^{\aleph_0} \forces f^*(s_\gen) \notin [T]$$
then by Lemma \ref{extendctbladfs} there is a finitely splitting tree $S$ and $p \in \SS^{\aleph_0}$ such that $\T_\alpha \cup \simpleset{S}$ is an a.d.f.s.\ family and
$$p \forces f^*(s_\gen) \in [S].$$
Set $\T_{\alpha + 1} = \T_\alpha \cup \simpleset{S}$. In the other case we set $\T_{\alpha + 1} = \T_\alpha$.
	
This finishes the construction and we set $\T = \bigcup_{\alpha < \aleph_1}\T_{\alpha}$. By construction we have that for any code $f:{^{<\omega}(^{<\omega}2)} \to \finseqomega$ for a continuous function $f^*:{^\omega (^\omega 2)} \to \infseqomega$ there is a $p \in \SS^{\aleph_0}$ and $T \in \T$ such that
$$p \forces f^*(s_\gen) \in [T].$$
We claim that this implies that for all $x \in [p]$, $f^*(x) \in [T]$, for if $x \in [p]$ and $n < \omega$ we define $p_{x \restr n \times n} \extends p$ as follows:
For $m < \omega$ let
$$p_{x \restr n \times n}(m) =
	\begin{cases}
	(p(m))_{x_m \restr n} & \text{if } m < n \\
	p(m) & \text{otherwise.}
	\end{cases}$$
This is well-defined and $p_{x \restr n \times n} \in \SS^{\aleph_0}$ since $x_m \restr n \in p(m)$. But then we have
$$p_{x \restr n \times n} \forces f(s_\gen \restr n \times n) \in T \text{ and } s_\gen \restr n \times n = x \restr n \times n $$
which yields that $f(x \restr n \times n) \in T$. Thus, we have shown $f^*(x) \in [T]$.
	
Assume that $\T$ is not maximal in some iterated Sacks-extension with countable support. The argument for countably supported product of Sacks forcing follows similarly. So let $\lambda$ be an ordinal and $\PP$ the iterated Sacks forcing of length $\lambda$; we may assume that $\lambda \geq \omega$. Further, let $\bar{q} \in \PP$ and $\dot{f}$ be a $\PP$-name for a real such that for all $T \in \T$ we have
$$\bar{q} \forces_{\PP} \dot{f} \notin [T].$$
Choose $q \extends \bar{q}$ and a standard enumeration $\Sigma = \seq{\sigma_k}{k \in \omega}$ of $\dom(q)$ and $f:{^{<\omega}(^{<\omega}2)} \to \finseqomega$ a code for a continuous function $f^*:{^\omega (^\omega 2)} \to \infseqomega$ such that
$$q \forces_\PP \dot{f} = f^*(\dot{e}^{q, \Sigma}(s_\gen)).$$
By construction of $\T$ we may choose $T \in \T$ and $p \in \SS^{\aleph_0}$ such that for all $x \in [p]$ we have $f^*(x) \in [T]$. Let $r$ be the \textquoteleft pull-back\textquoteright of $p$ under $\dot{e^{q, \Sigma}}$, i.e.\ for all $k < \omega$ we have
$$\forces_{\PP} [r(\sigma_k)] = (\dot{e}^{q, \Sigma}_k)^{-1}[[p(k)]] \quad \text{ so that } \quad  r \forces_\PP \dot{e}^{q, \Sigma}(s_\gen) \in [p].$$
By definition of $\dot{e}^{q, \Sigma}$ we have that $r \extends q$.
By absoluteness of $\Pi^1_1$-formulas we have
$$\forces_{\PP} \text{For all } x \in [p] \text{ we have } f^*(x) \in [T]$$
which yields the contradiction
\[
		r \forces \dot{f} = f^*(\dot{e}^{q, \Sigma}(s_\gen)) \in [T].
\]
\end{proof}

\begin{theorem}\label{thm.small.spectra}[CH]
Let $\lambda$ be a cardinal such that $\lambda^{\aleph_0} = \lambda$. Then $\SS^\lambda \forces ``\spec(\aT) = \simpleset{\aleph_1, \lambda}$".
\end{theorem}

\begin{remark}
The set of all finitely splitting trees is an effective Polish space with the topology generated by the basic open sets $\O(T) = \set{S}{S \text{ is a finitely splitting tree with } S \restr n = T}$ where $T$ is a finitely splitting $n$-tree for some $n < \omega$. Hence, we may use that $\SS^\lambda$ has continuous reading of names as above for finitely splitting trees. Furthermore, CH implies that $\SS^\lambda$ has the $\aleph_2$-c.c.
So if we assume that any name for a finitely splitting tree is nice, then we have that its evaluation only depends on $\aleph_1$-many conditions in $\SS^\lambda$.
Here, we already use the notion of a nice name for a finitely splitting tree as defined in the next section.
\end{remark}

\begin{proof}
	First, we prove that $\SS^\lambda \forces \c = \lambda$.
	For every $p \in \SS^\lambda$ let $\dot{F}_p$ be a $\SS^\lambda$-name such that
	\begin{align*}
	\SS^\lambda \forces \dot{F}_p = \{ (f^* &\circ \dot{e}^{p, \Sigma})(s_{\gen}\restr \dom(p)) \mid \Sigma \text{ is a standard enumeration of } \dom(p) \text{ and } \\ &f:{^{<\omega}(^{<\omega}2)} \to \finseqomega \text{ is a code for a continuous function } f^*:{^\omega (^\omega 2)} \to \infseqomega \}
	\end{align*}
	With this definition the continuous reading of names exactly means that
	$$
	\SS^\lambda \forces \infseqomega = \bigcup_{p \in \SS^\lambda} \dot{F}_p
	$$
	By CH we have $\SS^\lambda \forces ``|\dot{F}_p \leq \aleph_1|"$.
	As $\lambda^{\aleph_0} = \lambda$ we have $|\SS^\lambda| = \lambda$, so we compute
	$$
	\SS^\lambda \forces \c = \left| \bigcup_{p \in \SS^\lambda} \dot{F}_p \right| \leq \sum_{p \in \SS^\lambda} \left| \dot{F}_p \right| \leq \sum_{p \in \SS^\lambda} \aleph_1 = |\SS^\lambda| \cdot \aleph_1 = \lambda \cdot \aleph_1 = \lambda
	$$
Conversely, $\SS^\lambda$ adds $\lambda$-many different Sacks reals so we get $\SS^\lambda \forces ``\c \geq \lambda"$. Furthermore, by CH Theorem~\ref{indestructible_adfs} implies the existence of a Sacks-indestructible a.d.f.s.\ family, so $\spec(\aT) \supseteq \simpleset{\aleph_1, \lambda}$. Now, consider any $\aleph_1 < \kappa < \lambda$ and for a contradiction assume $\seq{\dot{T}_\alpha}{\alpha < \kappa}$ are nice $\SS^\lambda$-names for finitely splitting trees and $p \in \SS^\lambda$ is such that
$$p \forces \seq{\dot{T}_\alpha}{\alpha < \kappa} \text{ is a maximal a.d.f.s.\ family}.$$
For any $\alpha < \aleph_2$ choose $p_\alpha \extends p$ with $\dom(p_\alpha) = U_\alpha \in [\lambda]^{\aleph_0}$ such that $\dot{T}_\alpha$ is read continuously below $p_\alpha$. By CH we may use the $\Delta$-system lemma to choose $I_0 \in [\aleph_2]^{\aleph_2}$ and a root $U$ of $\seq{U_\alpha}{\alpha \in I_0}$. By a counting argument there is $I_1 \in [I_0]^{\aleph_2}$ such that $|U_\alpha \setminus U| = |U_\beta \setminus U|$ for all $\alpha, \beta \in I_1$. Then, for every $\alpha \in I_1$, we may choose a bijection $\phi_\alpha : U_\alpha \to \omega$ such that $\phi_\alpha \restr U = \phi_\beta \restr U$ for all $\alpha, \beta \in I_1$. Now, for $\alpha, \beta \in I_1$ define an involution $\pi_{\alpha, \beta}:\lambda \to \lambda$ by
	$$\pi_{\alpha,  \beta}(i) =
	\begin{cases}
	(\phi_\beta)^{-1}(\phi_\alpha(i)) & \text{for } i \in U_\alpha \\
	(\phi_\alpha)^{-1}(\phi_\beta(i)) & \text{for } i \in U_\beta \\
	i & \text{otherwise.}
	\end{cases}
	$$
	Clearly, this is well-defined as $U_\alpha \cap U_\beta = U$ and $\phi_\alpha \restr U = \phi_\beta \restr U$.
	Moreover, for all $\alpha, \beta, \gamma \in I_1$ we have
	\begin{enumerate}
		\item $\pi_{\alpha, \beta}$ maps $U_\alpha$ onto $U_\beta$ and $U_\beta$ onto $U_\alpha$, but is the identity on the rest of $\lambda$.
		\item $\pi_{\alpha,  \beta}$ is the identity on $U$.
		\item $\pi_{\alpha,\alpha} = \text{id}_\lambda$, $\pi_{\alpha,\beta} = \pi_{\beta, \alpha} $ and $\pi_{\alpha, \gamma} = \pi_{\alpha, \beta} \circ \pi_{\beta, \gamma} \circ \pi_{\alpha, \beta}$.
	\end{enumerate}
	$\pi_{\alpha,  \beta}$ naturally induces to an automorphism of $\SS^\lambda$ which we also denote with $\pi_{\alpha,\beta}$.
	Note that the three properties above also hold for the induced maps.
	Fix $\gamma_1 \in I_1$.
	For any $\alpha \in I_1$ we have that $\dom(\pi_{\alpha,  {\gamma_1}}(p_\alpha)) \subseteq U_{\gamma_1}$.
	But by CH $|\SS| = \aleph_1$ and $\aleph_1^{\aleph_0} = \aleph_1$, so there are only $\aleph_1$-many conditions $p \in \SS^\lambda$ with $\dom(p) \subseteq U_{\gamma_1}$.
	Again by a counting argument, we can find $I_2 \in [I_1]^{\aleph_2}$ such that $\pi_{\alpha,  {\gamma_1}}(p_\alpha) = \pi_{\beta,  {\gamma_1}}(p_{\beta})$ for all $\alpha, \beta \in I_2$.
	But this implies that
	$$
	\pi_{\alpha, \beta}(p_\alpha) = \pi_{\alpha, {\gamma_1}} \circ \pi_{{\gamma_1}, \beta} \circ \pi_{\alpha, {\gamma_1}}(p_\alpha) = \pi_{\alpha, {\gamma_1}} \circ \pi_{{\gamma_1}, \beta} \circ \pi_{\beta, {\gamma_1}}(p_\beta) = \pi_{\alpha, {\gamma_1}}(p_\beta) = p_\beta
	$$
	for all $\alpha, \beta \in I_2$.
	Notice that the last equality follows from $\dom(p_\beta) \cap (U_\alpha \cup U_{\gamma_1}) = U$.

Again, fix some $\gamma_2 \in I_2$. The mapping  $\pi_{\alpha,  \beta}$ induces an automorphism of $\SS^\lambda$-names and note again that the three properties above also hold for this extension. By the automorphism theorem we get that $\pi_{\alpha, {\gamma_2}}(\dot{T}_\alpha)$ can be read continuously below $\pi_{\alpha, {\gamma_2}}(p_\alpha) = p_{\gamma_2}$.
But by CH there are only $\aleph_1$ many different names for finitely splitting trees that can be read continuously below $p_{\gamma_2}$. Again by a counting argument we can find $I_3 \in [I_2 \setminus \{\gamma_2\}]^{\aleph_2}$ such that for all $\alpha, \beta \in I_3$ we have
	$$
	p_{\gamma_2} \forces \pi_{\alpha,  {\gamma_2}}(\dot{T}_\alpha) = \pi_{\beta,  {\gamma_2}}(\dot{T}_\beta).
	$$
We may assume that for all $\alpha < \kappa$, all maximal antichains in the nice name of $\dot{T}_\alpha$ restrict to maximal antichains below $p_\alpha$. Then, we write $\dot{T}_\alpha \restr p_\alpha$ for the restriction of the name $\dot{T}_\alpha$ below $p_\alpha$.
For all $\alpha \in I_3$, $\pi_{\alpha, \gamma_2}(\dot{T}_{\alpha} \restr p_\alpha)$ can be read continuously below $p_{\gamma_2}$, so we may assume that the maximal antichains have been chosen such that $\dom(\pi_{\alpha, \gamma_2}(\dot{T}_{\alpha} \restr p_\alpha)) = \dom(p_{\gamma_2}) = U_{\gamma_2}$.
	
We now define a new $\SS^\lambda$-name $\dot{T}_\kappa$ for a finitely splitting tree. By assumption, all names $\dot{T}_\alpha$ are nice, so we may choose $\set{p_{\alpha, i}}{i < \aleph_1, \alpha < \kappa}$ such that the evaluation of $\dot{T}_\alpha$ only depends on $\set{p_{\alpha,i}}{i < \aleph_1}$. Let $W_\alpha = \bigcup_{i < \aleph_1} \dom(p_{\alpha, i})$ and $W = \bigcup_{\alpha < \kappa} W_\alpha$. Then $|W| \leq \kappa < \lambda$, so we may choose $U_\kappa \in [\lambda]^{\aleph_0}$ such that $U_\kappa \cap W = U$ and $|U_\gamma \setminus U| = |U_\kappa \setminus U|$. Choose a bijection $\phi_\kappa: U_\kappa \to \omega$ such that $\phi_\kappa \restr U = \phi_\alpha \restr U$ for all $\alpha \in I_1$. Thus, we can extend the system of involutions to $I_1 \cup \simpleset{\kappa}$ by defining $\pi_{\alpha, \kappa}$ by the same equation as above, so that the three properties are preserved. Fix $\gamma_3 \in I_3$ and set $p_\kappa = \pi_{\gamma_3, \kappa}(p_{\gamma_3})$ and $\dot{T}_\kappa = \pi_{{\gamma_3}, \kappa}(\dot{T}_{\gamma_3})$. We claim that $p_\kappa$ and $\dot{T}_\kappa$ are independent of the choice of $\gamma_3$ in the following sense: For all $\alpha \in I_2$ (in particular for $\gamma_2$) we have $\pi_{\alpha, \kappa}(p_\alpha) = p_\kappa$ and for all $\alpha \in I_3$ we have 
$$p_\kappa \forces \pi_{\alpha, \kappa}(\dot{T}_\alpha) = \dot{T}_\kappa = \pi_{\gamma_3, \kappa}(\dot{T}_{\gamma_3}).$$
 For $\alpha \in I_2$ the first claim holds because
	$$
	\pi_{\alpha, \kappa}(p_\alpha) = \pi_{\alpha, \gamma_3} \circ \pi_{\gamma_3, \kappa} \circ \pi_{\alpha, \gamma_3}(p_\alpha) = \pi_{\alpha, \gamma_3} \circ \pi_{\gamma_3, \kappa} (p_{\gamma_3}) = \pi_{\alpha, \gamma_3}(p_{\kappa}) = p_\kappa
	$$
	where the last equality follows from $\dom(p_\kappa) \cap (U_\alpha \cup U_{\gamma_3}) = U$.
For the second claim let $\alpha \in I_3$ and apply the automorphism theorem to
	$$
	p_{\gamma_2} \forces \pi_{\alpha, \gamma_2}(\dot{T}_\alpha) = \pi_{\gamma_3, \gamma_2}(\dot{T}_{\gamma_3})
	$$
	to obtain
	$$
	p_\kappa \forces \pi_{\gamma_2, \kappa}(\pi_{\alpha, \gamma_2}(\dot{T}_\alpha)) = \pi_{\gamma_2, \kappa}(\pi_{\gamma_3, \gamma_2}(\dot{T}_{\gamma_3})).
	$$
Using the third property of the involutions and simplifying yields
$$p_\kappa \forces \pi_{\alpha, \gamma_2}(\pi_{\alpha, \kappa}(\dot{T}_\alpha)) = \pi_{\gamma_3, \gamma_2}(\pi_{{\gamma_3}, \kappa}(\dot{T}_{\gamma_3})).$$
To prove the claim it remains to show that
$$p_\kappa \forces \pi_{\alpha, \kappa}(\dot{T}_\alpha) = \pi_{\alpha, \gamma_2}(\pi_{\alpha, \kappa}(\dot{T}_\alpha)) \text{ and } \pi_{{\gamma_3}, \kappa}(\dot{T}_{\gamma_3}) = \pi_{\gamma_3, \gamma_2}(\pi_{{\gamma_3}, \kappa}(\dot{T}_{\gamma_3})).$$
First, since $\alpha \in I_3$ we have $U_\alpha \cap U_{\gamma_2} = U$. Remember $\pi_{\alpha, \gamma_2}(\dot{T}_\alpha \restr p_\alpha)$ only depends on $U_{\gamma_2}$, so it does not depend on $U_\alpha \setminus U$. But then $\pi_{\gamma_2, \alpha}(\pi_{\alpha, \gamma_2}(\dot{T}_\alpha \restr p_\alpha)) = \dot{T}_\alpha \restr p_\alpha$ does not depend on $U_{\gamma_2} \setminus U$ and clearly also not on $U_\kappa \setminus U$. Hence, $\pi_{\alpha, \kappa}(\dot{T}_\alpha \restr p_\alpha)$ does not depend on $U_{\gamma_2} \setminus U$ and $U_{\alpha} \setminus U$.
This shows that $$p_\kappa \forces \pi_{\alpha, \gamma_2}(\pi_{\alpha, \kappa}(\dot{T}_\alpha)) = \pi_{\alpha, \gamma_2}(\pi_{\alpha, \kappa}(\dot{T}_\alpha \restr p_\alpha)) = \pi_{\alpha, \kappa}(\dot{T}_\alpha \restr p_\alpha) = \pi_{\alpha, \kappa}(\dot{T}_\alpha).$$
Notice, that the second equation follows analogously using $\gamma_3 \in I_3$.
	
Finally, let $\beta < \kappa$. Choose $\alpha \in I_3$ such that $U_\alpha \cap W_\beta \subseteq U$. This is possible, as we have $|W_\beta| \leq \aleph_1 < |I_3| = \aleph_2$ and for every $i \in W_\beta \setminus U$ there is at most one $\alpha \in I_3$ such that $i \in U_\alpha$ as $\seq{U_\alpha}{\alpha \in I_3}$ is a $\Delta$-system. But then $\pi_{\alpha, \kappa}(\dot{T}_\beta) = \dot{T}_\beta$ and $p_\kappa \forces `` \dot{T}_\kappa = \pi_{\alpha, \kappa}(\dot{T}_\alpha)"$. Now, applying the automorphism $\pi_{\alpha, \kappa}$ to
$$p_{\alpha} \forces \dot{T}_\alpha \text{ and } \dot{T}_\beta \text{ are almost disjoint} $$
yields that
$$p_\kappa \forces \pi_{\alpha, \kappa}(\dot{T}_\alpha) = \dot{T}_\kappa \text{ and } \dot{T}_\beta \text{ are almost disjoint.}$$
But $\dom(p) \subseteq U$ implies $p_\kappa \extends p$, which contradicts that
$$	p \forces \seq{\dot{T}_\alpha}{\alpha < \kappa} \text{ is a maximal a.d.f.s.\ family.}
$$
\end{proof}

Finally, we discuss how our results answer Question 2 of Newelski's \cite{New87}.
In fact, if we replace the use of Lemma \ref{extendctbladfs} in the proof of Theorem \ref{indestructible_adfs} by the following lemma

\begin{lemma}\label{extendctblnwd}
	Let $\T$ be a countable family of almost disjoint nowhere dense trees on $^\omega 2$, $\lambda$ be a cardinal, $p \in \SS^\lambda$ and $\dot{f}$ be a $\SS^\lambda$-name for an element of $^\omega 2$ such that for all $T \in \T$ we have
	$$p \forces \dot{f} \notin [T].$$
	Then there is a nowhere dense tree $S$ and $q \extends p$ such that $\T \cup \simpleset{S}$ is almost disjoint and
	$$q \forces \dot{f} \in [S].$$
\end{lemma}
then we may obtain the following result with exactly the same proof:

\begin{theorem}[CH]
	There is a partition of $^\omega 2$ into nowhere dense closed sets which is indestructible by forcing with countably supported iteration or product of Sacks forcing of arbitrary length.
\end{theorem}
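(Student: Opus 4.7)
The plan is to follow the proof of Theorem~\ref{THM_Sacks_ADFS} \emph{verbatim}, substituting Lemma~\ref{LEM_Extend_Ctbl_NWD} for Lemma~\ref{LEM_Extend_Ctbl_ADFS} and enumerating codes for continuous functions $f^*:{^\omega (^\omega 2)} \to {^\omega 2}$ rather than into $\infseqomega$. The diagonal {\sf CH} construction then produces an $\aleph_1$-sized family $\T$ of pairwise almost disjoint nowhere dense trees on $2^{<\omega}$ such that every continuously-read $\SS^\lambda$-name $\dot f$ for an element of $^\omega 2$ is forced into $[T]$ for some $T \in \T$. The sub-condition $p_{x \restr n \times n}$ argument together with $\Pi^1_1$-absoluteness transfer without change, so $\T$ remains maximal in every countably supported iterated or product Sacks extension. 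Since subtrees of $2^{<\omega}$ are finitely splitting, König's lemma converts almost disjointness into disjointness of the branch sets, so maximality of $\T$ is exactly the partition property.

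The substantive task is therefore to prove Lemma~\ref{LEM_Extend_Ctbl_NWD}, which I would model closely on Lemma~\ref{LEM_Extend_Ctbl_ADFS}. Enumerate $\T = \set{T_n}{n<\omega}$ and $2^{<\omega} = \set{s_n}{n<\omega}$, and build a fusion $\seq{q_n}{n<\omega}$ with $q_{n+1} \extends_{F_n,n} q_n$ as in the original. Alongside the dense open sets
\[
	D_n := \set{r \extends p}{\exists k<\omega,\ r \forces \dot{f}\restr k \notin T_n},
\]
whose density below $p$ is guaranteed by $p \forces \dot f \notin [T_n]$, I would use the second family
\[
	E_n := \set{r \extends p}{\exists t \extends s_n,\ r \forces t \not\subseteq \dot{f}},
\]
whose density below $p$ follows from the fact that any $r \extends p$ forces $\dot{f}$ to be a single element of $^\omega 2$, so that $r$ cannot force every extension of $s_n$ in $2^{<\omega}$ to be an initial segment of $\dot{f}$. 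A single application of Lemma~\ref{LEM_Dense_Open} with $D_n \cap E_n$ at stage $n$ then yields $q_{n+1} \extends_{F_n,n} q_n$ together with uniform witnesses $k_n < \omega$ and $t_n \extends s_n$ such that
\[
	q_{n+1} \forces \dot{f}\restr k_n \notin T_n \quad\text{and}\quad q_{n+1} \forces t_n \not\subseteq \dot{f}.
\]

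Setting $q := \bigcap_{n<\omega} q_n$ and $S := \set{s \in 2^{<\omega}}{\exists r \extends q,\ r \forces s \subseteq \dot{f}}$, $S$ is a downward-closed subtree of $2^{<\omega}$ with $q \forces \dot{f} \in [S]$. Almost disjointness of $S$ from each $T_n$ follows exactly as in the proof of Lemma~\ref{LEM_Extend_Ctbl_ADFS}: any $s \in S \cap T_n$ with $|s| \geq k_n$ would admit $r \extends q$ forcing $\dot{f}\restr k_n = s \restr k_n \in T_n$, contradicting $q \forces \dot{f}\restr k_n \notin T_n$. Nowhere-denseness of $[S]$ is immediate from the second fusion requirement: for every $s_n \in 2^{<\omega}$ we have $t_n \notin S$, so $[t_n] \subseteq [s_n]$ is a basic open neighborhood disjoint from $[S]$.

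The main obstacle is merely to see that the $\infseqomega$-bounding step of the original proof, which was there used to produce a finitely splitting $S$, can be replaced by the second fusion requirement yielding nowhere-denseness of $S$; no bounding is needed in the $^\omega 2$ setting, since every subtree of $2^{<\omega}$ is already finitely splitting. Combining the two density requirements into a single fusion is routine, since $D_n \cap E_n$ is itself dense open below $p$.
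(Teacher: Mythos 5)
Your overall strategy is exactly the paper's: the theorem is obtained by rerunning the proof of Theorem~\ref{THM_Sacks_ADFS} with Lemma~\ref{LEM_Extend_Ctbl_NWD} in place of Lemma~\ref{LEM_Extend_Ctbl_ADFS}, and the only substantive new content is the proof of Lemma~\ref{LEM_Extend_Ctbl_NWD}, which the paper itself leaves as ``a routine fusion argument'' forcing the hull of $\dot f$ to be nowhere dense. Your reduction of the theorem to that lemma and your handling of the sets $D_n$ are fine.

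There is, however, a gap in the step that is supposed to make $S$ nowhere dense. Applying Lemma~\ref{LEM_Dense_Open} to $D_n \cap E_n$ gives, for \emph{each} suitable function $\sigma$, a witness $t_\sigma \supseteq s_n$ with $q_{n+1} \restr \sigma \forces t_\sigma \not\subseteq \dot f$; it does not give a single $t_n$ with $q_{n+1} \forces t_n \not\subseteq \dot f$. For the $k$'s the finitely many witnesses merge by taking the maximum, because ``$\dot f \restr k \notin T_n$'' is persistent under increasing $k$; the statements ``$t \not\subseteq \dot f$'' for different $t \supseteq s_n$ admit no such common strengthening. Concretely, if $q_{n+1} \restr \sigma_1$ forces $s_n \concat 0 \subseteq \dot f$ and $q_{n+1} \restr \sigma_2$ forces $s_n \concat 1 \subseteq \dot f$, then $t_{\sigma_1} = s_n \concat 1$ and $t_{\sigma_2} = s_n \concat 0$ are legitimate witnesses, yet both $s_n \concat 0$ and $s_n \concat 1$ lie in the hull $S$, and nothing so far prevents the hull above $s_n$ from being a perfect tree --- which is in fact the typical situation for the hull of a continuously read name below a Sacks condition (consider the name for a Sacks real itself). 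The repair is a moving-target amalgamation inside stage $n$: enumerate the suitable functions as $\sigma_0, \dots, \sigma_{N-1}$, set $u_0 := s_n$, and recursively choose $r_i \extends q^{(i)} \restr \sigma_i$ together with $u_{i+1} \supseteq u_i$ such that $r_i \forces u_{i+1} \not\subseteq \dot f$ (possible by your density argument applied to $u_i$ in place of $s_n$), amalgamating $r_i$ back into $q^{(i)}$ exactly as in the proof of Lemma~\ref{LEM_Dense_Open}. Then $t_n := u_N$ works for every $\sigma_i$ simultaneously, since $u_{i+1} \subseteq t_n$ and the conditions $q^{(N)} \restr \sigma_i$ form a maximal antichain below $q^{(N)}$. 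With that repair your proof goes through and coincides with the intended argument.
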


Notice that also Lemma \ref{extendctblnwd} has a similar proof as Lemma \ref{extendctbladfs}, where a routine fusion argument can be used to force the hull for the name of an element of $^\omega 2$ to be a nowhere dense tree on $^\omega 2$.
Finally, with a bit of extra work it can be shown that a part of the proof of Theorem \ref{indestructible_adfs} in fact implies that every partition indestructible by countable product of Sacks-forcing is already indestructible by any countably supported product or iteration of Sacks-forcing. This shows that also the partition forced by Newelski in \cite{New87} is indestructible by countably supported iterations of Sacks-forcing.
A complete argument is contained in the author's follow-up paper \cite{fischerschembecker}.

\section{The consistency of $\d < \aT = \a$}

In all our models that we considered so far we have that $\d = \aT$.
In this section we prove that consistently $\mu=\d < \aT = \lambda$ can hold for various values above a measurable cardinal.
In \cite{shelahtemplate} Shelah introduced template iterations to prove the consistency of $\d < \a$; see also \cite{brendletemplate} for an insightful exposition of Shelah's results.
Shelah first proved that one can iterate templates canonically by taking ultrapowers and use an ``average of names"-argument to obtain the consistency of $\kappa < \d < \a = \c$ above a measurable cardinal $\kappa$.
We will prove that in this model we also have that $\aT = \c$ by a similar ``average of names"-argument.
Notably, Shelah constructed a second template iteration, so that a more involved ``isomorphism of names"-argument yields a model of $\aleph_2 \leq \d < \aT$ without the use of a measurable cardinal.
Even though these results finally solved the consistency of $\d < \a$ these technique cannot be used to settle the consistency of $\d = \aleph_1 < \a = \aleph_2$.

Throughout this section fix a measurable cardinal $\kappa$ and a $<\!\!\kappa$-complete ultrafilter $\U$ on $\kappa$.
First, we briefly restate the basic definitions and properties of the ultrapower forcing given in \cite{shelahtemplate}:

\begin{definition}
Let $\PP$ be a forcing. The ultrapower forcing of $\PP$ by $\U$ is defined as the set of all equivalence classes
$$\PP^\kappa/\U = \set{[f]_\U}{f:\kappa \to \PP}$$
where $[f]_\U = [g]_U$ iff $\set{\alpha < \kappa}{f(\alpha) = g(\alpha)} \in \U$. Usually, we will drop the subscript $\U$.
Furthermore, we order $\PP^\kappa / \U$ by $[f]_\U \leq [g]_\U$ iff $\set{\alpha < \kappa}{f(\alpha) \leq g(\kappa)} \in \U$.
It is easy to see that this defines a partial order. Furthermore, we have an embedding $\PP \to \PP^\kappa/\U$ of partial orders, where $p \in \PP$ is mapped to the equivalence class of the constant map $f_p$, i.e.\ $f_p(\alpha) = p$ for all $\alpha \in \kappa$. Hence, we may identify $p$ with $[f_p]$.
\end{definition}

\begin{lemma}
Let $\PP$ be a forcing. Then the following statements hold:
	\begin{enumerate}
		\item $\PP \completesubposet \PP^\kappa/\U$ iff $\PP$ is $\kappa$-cc. 
		\item If $\mu < \kappa$ and $\PP$ is $\mu$-cc, then also $\PP^\kappa/\U$ is $\mu$-cc.
	\end{enumerate}
\end{lemma}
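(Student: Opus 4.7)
The plan is to first isolate the key compatibility characterization in $\PP^\kappa/\U$ and then derive both statements from it. Specifically, I would prove that $[f] \compat [g]$ in $\PP^\kappa/\U$ if and only if $\set{\alpha < \kappa}{f(\alpha) \compat g(\alpha)} \in \U$: if a witness $[h] \extends [f], [g]$ exists, then on the intersection of the two $\U$-large sets defining these extensions, $h(\alpha)$ witnesses compatibility of $f(\alpha)$ and $g(\alpha)$ pointwise; conversely, starting from a $\U$-large set of pointwise-compatible pairs, any choice of pointwise witnesses $h(\alpha)$ assembles to a global $[h]$. Applied to constant maps, this immediately yields that the embedding $\PP \hookrightarrow \PP^\kappa/\U$ preserves both $\extends$ and $\perp$, so in (1) only the preservation of maximal antichains is still in question.

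For the forward direction of (1), I would assume $\PP$ is not $\kappa$-cc, fix an antichain $\set{p_\alpha}{\alpha < \kappa}$, and extend it to a maximal antichain $A$ in $\PP$. Setting $f(\alpha) := p_\alpha$, for each $q \in A$ the set $\set{\alpha < \kappa}{p_\alpha \compat q}$ has at most one element (namely $\beta$ if $q = p_\beta$, otherwise none), so it is not in $\U$ by non-principality. By the characterization, $[f]$ is incompatible with every element of $A$, so $A$ fails to be maximal in $\PP^\kappa/\U$, contradicting $\PP \completesubposet \PP^\kappa/\U$.

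For the converse, assume $\PP$ is $\kappa$-cc and let $A \subseteq \PP$ be a maximal antichain, so $|A| < \kappa$. Given any $[f] \in \PP^\kappa/\U$, for each $\alpha < \kappa$ I would pick some $g(\alpha) \in A$ with $g(\alpha) \compat f(\alpha)$. The family $\set{g^{-1}(q)}{q \in A}$ partitions $\kappa$ into fewer than $\kappa$ pieces, so $<\!\kappa$-completeness of $\U$ yields some $X_q := g^{-1}(q) \in \U$. On $X_q$ the condition $q$ is pointwise compatible with $f(\alpha)$, hence $q \compat [f]$ in $\PP^\kappa/\U$ by the characterization, showing that $A$ remains maximal.

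Finally, for (2), I would argue by contradiction: assume $\set{[f_\alpha]}{\alpha < \mu}$ is an antichain of size $\mu$ in $\PP^\kappa/\U$. The characterization gives $X_{\alpha, \beta} := \set{\gamma < \kappa}{f_\alpha(\gamma) \perp f_\beta(\gamma)} \in \U$ for each pair $\alpha < \beta < \mu$. Since there are only $\mu < \kappa$ such pairs, $<\!\kappa$-completeness of $\U$ implies $\bigcap_{\alpha < \beta < \mu} X_{\alpha, \beta} \in \U$, and in particular this intersection is non-empty; picking any $\gamma$ in it yields an antichain $\set{f_\alpha(\gamma)}{\alpha < \mu}$ in $\PP$ (its elements being pairwise incompatible, hence distinct), contradicting $\mu$-cc. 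The main obstacle, insofar as there is one, is really just bookkeeping: making sure the compatibility characterization is proved cleanly and that $<\!\kappa$-completeness is applied at exactly the right place, namely to extract a $\U$-large monochromatic set from a partition of $\kappa$ into $<\!\kappa$ pieces in (1), and to keep $\mu < \kappa$ many members of $\U$ simultaneously non-empty in (2).
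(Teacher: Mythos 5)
Your proof is correct. Note that the paper itself gives no argument here and simply refers the reader to Brendle's exposition of Shelah's construction; the proof you give --- the \L o\'s-style compatibility characterization $[f]\compat[g]$ iff $\set{\alpha<\kappa}{f(\alpha)\compat g(\alpha)}\in\U$, followed by $<\!\kappa$-completeness applied to the partition $\set{g^{-1}(q)}{q\in A}$ in (1) and to the $\mu$-many sets $X_{\alpha,\beta}$ in (2) --- is exactly the standard argument found there, with the one implicit ingredient (non-principality of $\U$, which is part of $\kappa$ being measurable) correctly invoked where the forward direction of (1) needs it.
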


\begin{proof}
	See \cite{brendletemplate}.
\end{proof}

From now on fix a c.c.c.\ forcing $\PP$, so that both items of the previous lemma apply. Analogous to the average of $\PP$-names of reals in \cite{brendletemplate} we consider the average of $\PP$-names of finitely splitting trees. Let $\A(\PP)$ be the set of all maximal antichains in $\PP$.

\begin{definition}
The pair $(A, T_\bullet)$, where $T_\bullet=\langle T_n: n\in\omega\rangle$, is a nice $\PP$-name for a finitely splitting tree iff $A:\omega \to \A(\PP)$ and for every $n < \omega$ we have that $T_n:A(n) \to \P(\finseqleqomegaarg{n})$ such that
\begin{enumerate}
		\item For all $n < \omega$ and $p \in A(n)$ we have that $T_n(p)$ is a finitely splitting $n$-tree.
		\item For all $n < m$ and $p\in A(n), q \in A(m)$ with $p \compat q$ we have that $T_m(q) \cap \finseqleqomegaarg{n} = T_n(p)$.
\end{enumerate}
\end{definition}

Given any $\PP$-name $\dot{T}$ for a finitely splitting tree, for every $n < \omega$ we may choose an antichain $A(n)$ such that every element $p \in A(n)$ decides $\dot{T} \restr n$ as $T_n(p)$.
It is easy to see that then the second item is also satisfied by these choices, so we have defined a nice $\PP$-name $(A, T_\bullet)$ for a finitely splitting tree.
But from $(A, T_\bullet)$ we can define a $\PP$-name $\dot{S}$ for a finitely splitting tree by
$$
\dot{S} = \set{(\check{s},p)}{\text{there is an } n < \omega \text{ such that } p \in A(n) \text{ and } s \in T_n(p)}
$$
It is easy to see that $\PP \forces ``\dot{T} = \dot{S}"$, so we in fact showed that $\dot{T}$ can be represented by a nice $\PP$-name for a finitely splitting tree.
Hence, in the following we may only consider nice names for finitely splitting trees.
Using this, we want to understand how nice $\PP^\kappa/\U$-names for finitely splitting trees can be constructed from nice $\PP$-names for finitely splitting trees and vice versa.

For $\alpha < \kappa$ let $(A^\alpha, T^\alpha_\bullet)$ be nice $\PP$-names for finitely splitting trees. Fix $\alpha < \kappa$ and for $n < \omega$ enumerate $A^\alpha(n) = \set{p^\alpha_{n,i}}{i < \omega}$. For fixed $n,i < \omega$ consider $[p_{n,i}] = \seq{p^{\alpha}_{n,i}}{\alpha < \kappa} / \U \in \PP^{\kappa} / \U$ and $T_{n,i} = [T_{n,i}] = \seq{T^\alpha_n(p^\alpha_{n,i})}{\alpha < \kappa} / \U \in (\P(\finseqleqomegaarg{n}))^\kappa / \U = \P(\finseqleqomegaarg{n})$. By countable completeness of $\U$ we get that  $A(n) = \set{[p_{n,i}]}{i < \omega}$ is a maximal antichain for all $n < \omega$. Set $T_n([p_{n,i}]) = T_{n,i}$. We claim that $(A,T_\bullet)$ is a nice $\PP^\kappa / \U$-name for a finitely splitting tree, which we call the average of $(A^\alpha, T^\alpha_\bullet)$.
	
But $(1)$ follows from the fact that $T^\alpha_n(p^\alpha_{n,i})$ is a finitely splitting tree for all $\alpha < \kappa$ and $n,i < \omega$. For item $(2)$ let $n < m$ and assume $[p_{n,i}] \compat [p_{m,j}]$ for some $i,j < \omega$. But then
$$\set{\alpha < \kappa}{p^\alpha_{n,i} \compat p^\alpha_{m,j}} \cap \set{\alpha < \kappa}{T_{n,i} = T^\alpha_n(p^\alpha_{n,i})} \cap \set{\alpha < \kappa}{T_{m,j} = T^\alpha_m(p^\alpha_{m,j})} \in \U$$
so choose such an $\alpha < \kappa$. Then we have that
$$T_m([p_{m,j}]) = T_{m,j} = T^\alpha_m(p_{m,j}^\alpha) \text{ and } T_n([p_{n,i}]) = T_{n,i} = T^\alpha_n(p_{n,i}^\alpha)$$
which implies $T_m([p_{m,j}]) \cap \finseqleqomegaarg{n} = T^\alpha_m(p_{m,j}^\alpha) \cap \finseqleqomegaarg{n} = T^\alpha_n(p_{n,i}^\alpha) = T_n([p_{n,i}])$, since $p^\alpha_{n,i} \compat p^\alpha_{m,j}$.
	
Conversely, assume we have a nice $\PP^\kappa / \U$-name $(A,T_\bullet)$ for a finitely splitting tree. For $n < \omega$ enumerate $A(n) = \set{[p_{n,i}]}{i < \omega}$. Note that by countable completeness of $\U$
$$D = \set{\alpha < \kappa}{\set{p^\alpha_{n,i}}{i < \omega} \text{ is a maximal antichain for all } n < \omega} \in \U.$$
Thus, by modifying the $p^\alpha_{n,i}$ on a small set with respect to the ultrafilter $\U$ we may also assume $A^\alpha(n) = \set{p^{\alpha}_{n,i}}{i < \omega}$ is a maximal antichain for all $n < \omega$ and $\alpha < \kappa$. But then, defining $T^\alpha_{n}(p^\alpha_{n,i}) = T_n([p_{n,i}])$ yields a nice $\PP$-name $(A^\alpha, T^\alpha_\bullet)$ for a finitely splitting tree for all $\alpha < \kappa$.

\begin{lemma}
	Assume $\dot{\T}$ is a $\PP$-name for an a.d.f.s.\ family of size $\lambda \geq \kappa$.
	Then
$$\forces_{\PP^\kappa / \U} \dot{\T} \text{ is not a maximal a.d.f.s.\ family.}$$
\end{lemma}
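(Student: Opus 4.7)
The plan is to build a $\PP^\kappa/\U$-name $\dot{S}$ for a finitely splitting tree, namely the ultrapower \emph{average} of $\kappa$-many trees from $\dot{\T}$, and to prove via \L{}o\'{s}'s theorem applied to the ultrapower embedding $j : V \to M := V^\kappa/\U$ that $\dot{S}$ is forced to be almost disjoint from every $\dot{T}_\beta$, $\beta < \lambda$. Since $\dot{S}$ is forced to be an infinite finitely splitting tree, it cannot coincide with any $\dot{T}_\beta$ (otherwise $\dot{S} \cap \dot{S} = \dot{S}$ would be infinite), and hence $\dot{\T} \cup \simpleset{\dot{S}}$ strictly extends $\dot{\T}$ and is still a.d.f.s., witnessing the claimed non-maximality.

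Concretely, enumerate $\dot{\T} = \seq{\dot{T}_\alpha}{\alpha < \lambda}$, use Remark~\ref{REM_Nice_Names} to replace each $\dot{T}_\alpha$ by a nice $\PP$-name $(A^\alpha, T^\alpha_\bullet)$, and, since $\lambda \geq \kappa$, take the first $\kappa$ of these and form their $\U$-average nice $\PP^\kappa/\U$-name $(A, T_\bullet)$ as in Remark~\ref{REM_Nice_Names_Ultrapower}; call the resulting name $\dot{S}$. The main claim is then that for every $\beta < \lambda$,
$$
  \PP^\kappa/\U \forces ``\dot{S} \cap \dot{T}_\beta \text{ is finite''}.
$$
By hypothesis, the first-order statement
$$
  \forall \alpha < \lambda \; \bigl( \alpha \neq \beta \;\rightarrow\; \PP \forces ``\dot{T}_\alpha \cap \dot{T}_\beta \text{ is finite''}\bigr)
$$
holds in $V$. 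Applying $j$ and elementarity, the transferred statement holds in $M$ for $j(\PP)$, $j(\dot{\T})$, $j(\lambda)$ and $j(\beta)$. The equivalence class $[\mathrm{id}]_\U \in M$ of the identity map $\mathrm{id} : \kappa \to \kappa$ satisfies $j(\gamma) < [\mathrm{id}]_\U < j(\kappa)$ for every $\gamma < \kappa$; hence $[\mathrm{id}]_\U \neq j(\beta)$ in every case (either $j(\beta) = \beta < \kappa \leq [\mathrm{id}]_\U$ or $j(\beta) \geq j(\kappa) > [\mathrm{id}]_\U$). Instantiating with $\alpha := [\mathrm{id}]_\U$ then yields in $M$
$$
  j(\PP) \forces ``j(\dot{\T})_{[\mathrm{id}]_\U} \cap j(\dot{T}_\beta) \text{ is finite''}.
$$

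It remains to translate this back to $V$ through the canonical identifications: $j(\PP) \in M$ is naturally isomorphic to $\PP^\kappa/\U \in V$; under this isomorphism $j(\dot{T}_\beta)$ corresponds, via the complete embedding $\PP \completesubposet \PP^\kappa/\U$, to $\dot{T}_\beta$ viewed as a $\PP^\kappa/\U$-name; and by \L{}o\'{s}'s theorem $j(\dot{\T})_{[\mathrm{id}]_\U} = [\seq{\dot{T}_\alpha}{\alpha < \kappa}]_\U$ agrees, by the construction recalled in Remark~\ref{REM_Nice_Names_Ultrapower}, with the average name $\dot{S}$. Since $\PP^\kappa/\U$, its names, and the arithmetical predicate ``the intersection is finite'' are absolute between the transitive class $M$ and $V$, the forcing statement transfers to $V$, giving the desired almost disjointness. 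The main technical obstacle is precisely this last identification: verifying that the combinatorially defined name $\dot{S}$ from Remark~\ref{REM_Nice_Names_Ultrapower} is genuinely (and not just morally) the \L{}o\'{s}-class $j(\dot{\T})_{[\mathrm{id}]_\U}$, so that the elementarity-transferred $M$-level assertion about $j(\PP)$-forcing becomes a bona fide $V$-level statement about our name $\dot{S}$.
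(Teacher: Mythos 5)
Your argument is correct in substance, but it takes a genuinely different route from the paper's. The paper proves the lemma entirely combinatorially: for each $\beta$ and each $\alpha \neq \beta$ it fixes a maximal antichain $\set{q^\alpha_i}{i<\omega}$ and bounds $n^\alpha_i$ with $q^\alpha_i \forces \dot{T}^\alpha \cap \dot{T}^\beta \subseteq \finseqleqomegaarg{n^\alpha_i}$, averages these antichains and bounds through $\U$ (countable completeness keeps the averaged antichains maximal), and refutes a hypothetical condition forcing a long node into $\dot{T} \cap \dot{T}^\beta$ by intersecting finitely many $\U$-large sets. You run the same construction semantically: transfer the almost-disjointness statement along the ultrapower embedding $j\colon V \to M$, instantiate at $[\mathrm{id}]_\U$ (your case analysis showing $[\mathrm{id}]_\U \neq j(\beta)$ is correct), and pull the conclusion back. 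This is shorter and arguably more illuminating, but it leans on three identifications the hands-on proof never confronts: (i) absoluteness between $M$ and $V$ of the forcing relation for the statement ``$\dot{S} \cap \dot{T}_\beta$ is finite'' --- true here because $j(\PP)$ and $\PP^\kappa/\U$ have literally the same conditions and ordering and the statement is arithmetic and bounded in the two names, but this deserves a sentence; (ii) that $j$ acts pointwise on $\dot{T}_\beta$, which uses that a nice $\PP$-name for a subtree of $\finseqomega$ has hereditary size $<\kappa$ since $\PP$ is c.c.c., so that $j(\dot{T}_\beta)$ really is $\dot{T}_\beta$ read along the complete embedding $\PP \completesubposet \PP^\kappa/\U$; and (iii) the identification of $j(\dot{\T})_{[\mathrm{id}]_\U} = [\alpha \mapsto \dot{T}_\alpha]_\U$ with the average name, which you flag as the main obstacle but do not discharge. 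Point (iii) does close: when each $\dot{T}_\alpha$ is given as a nice name $(A^\alpha, T^\alpha_\bullet)$, the computation of Remark~\ref{REM_Nice_Names_Ultrapower} shows $[\alpha \mapsto A^\alpha(n)]_\U = \set{[p_{n,i}]}{i<\omega}$ and likewise for $T_\bullet$, so the fundamental theorem of ultrapowers gives exactly the average name, on the nose. With (i)--(iii) made explicit your proof is complete; the paper's version is in effect the same argument with the embedding unwound so that no absoluteness claims are needed, which is why it fits more comfortably into the template framework. One small point both proofs share tacitly: the enumeration $\seq{\dot{T}_\alpha}{\alpha<\lambda}$ should be chosen forced injective (possible since $\dot{\T}$ is forced to have size $\lambda$) so that $\forces_\PP \dot{T}_\alpha \cap \dot{T}_\beta$ is finite for $\alpha \neq \beta$; your explicit observation that the averaged tree is infinite and hence not already a member of $\dot{\T}$ is a detail the paper leaves implicit.
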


\begin{proof}
	Choose $\PP$-names for finitely splitting trees $\dot{T}^\alpha$ such that
	$$
	\forces_\PP \dot{\T} = \set{\dot{T}^\alpha}{\alpha < \lambda}
	$$
	For $\alpha < \kappa$ choose nice names $(A^\alpha, T^\alpha_\bullet)$ for $\dot{T}^\alpha$ and enumerate $A^\alpha(n) = \set{p^\alpha_{n,i}}{i < \omega}$.
	The average $(A, T_\bullet)$ of $\seq{(A^\alpha, T^\alpha_\bullet)}{\alpha < \kappa}$ defined as in the previous remark is a nice $\PP^\kappa / \U$-name for a finitely splitting tree.
	Let $\dot{T}$ be the $\PP^\kappa / \U$-name for $(A, T_\bullet)$.
	We claim that
	$$
	\forces_{\PP^\kappa / \U} \dot{T} \text{ is almost disjoint from } \dot{\T}
	$$
	so let $\beta < \lambda$.
	For all $\alpha < \kappa$ with $\alpha \neq \beta$ there is a maximal antichain $\set{q^\alpha_i}{i < \omega}$ and $\set{n^\alpha_i}{i < \omega}$ such that
	$$
	q^\alpha_i \forces_\PP \dot{T}^\alpha \cap \dot{T}^\beta \subseteq \finseqleqomegaarg{n^\alpha_i}
	$$
	Consider $[q_i] = \seq{q^\alpha_i}{\alpha < \kappa} / \U \in \PP^{\kappa} / \U$ and $n_i = [n_i] = \seq{n^\alpha_i}{\alpha < \kappa} / \U \in \omega^\kappa / \U = \omega$.
	Again, by countable completeness of $\U$ we have that $\set{[q_i]}{i < \omega}$ is a maximal antichain.
	We prove that for every $i < \omega$ we have
	$$
	[q_i] \forces_{\PP^\kappa / \U} \dot{T}^\beta \cap \dot{T} \subseteq \finseqleqomegaarg{n_i}
	$$
	Assume not.
	Then there are $i < \omega$, $l > n_i$, $s \in \finseqleqomegaarg{l} \setminus \finseqleqomegaarg{n_i}$ and $[r] \leq [q_i]$ such that
	$$
	[r] \forces_{\PP^\kappa / \U} s \in \dot{T}^\beta \cap \dot{T}
	$$
	By possibly extending $[r]$ we may assume that there is a $j < \omega$ such that $[r] \extends [p_{l,j}]$ and also $s \in T_{l}([p_{l,j}])$.
	Furthermore, let $[r] = \seq{r^\alpha}{\alpha < \kappa} / \U$.
	Then we have
	$$
	\set{\alpha < \kappa}{r^\alpha \extends p^\alpha_{l,j}, r^\alpha \extends q^\alpha_i, T^\alpha_{l}(p^\alpha_{l,j}) = T_{l}([p_{l,j}]), n_i^\alpha = n_i \text{ and } r^\alpha \forces_\PP s \in \dot{T}^\beta} \in \U 
	$$
	so choose such an $\alpha < \kappa$. 
	Then since $r^\alpha \extends p^\alpha_{l,j}$, $T^\alpha_{l}(p^\alpha_{l,j}) = T_{l}([p_{l,j}])$ and $s \in T_{l}([p_{l,j}])$ we have that
	$$
	r^\alpha \forces_\PP s \in \dot{T}^\alpha \cap \dot{T}^\beta
	$$
	but also
	$$
	q^\alpha_i \forces_\PP \dot{T}^\alpha \cap \dot{T}^\beta \subseteq \finseqeqomegaarg{n^\alpha_i}
	$$
	which is a contradiction, since $r_\alpha \extends q^\alpha_i$ and $s \in \finseqleqomegaarg{l} \setminus \finseqleqomegaarg{n_i}$, but $n_i = n^\alpha_i$.
\end{proof}

Thus, we can extend the result in \cite{shelahtemplate} to $\aT$ as well:
\begin{theorem}\label{thm.aT.ultrapowers}
Assume $\kappa$ is measurable and $\kappa < \mu < \lambda$, $\lambda = \lambda^\omega$ are regular cardinals such that  $\nu^\kappa < \lambda$ for all $\nu < \lambda$. Then there is a forcing extension satisfying $\b = \d = \mu$ and $\a= \aT = \c = \lambda$.
\end{theorem}

\begin{proof}
	This holds in Shelah's template iteration for iterating ultrapowers.
	There are no maximal a.d.f.s.\ families of size $<\!\!\mu$ as $\d \leq \aT$.
	Furthermore, the proof that there are no maximal a.d.f.s.\ families of size $\xi \in [\mu,\lambda)$ works completely analogous as the proof of Theorem 2.3 in \cite{brendletemplate} where the use of Lemma 0.3 is replaced with the previous lemma.
\end{proof}

\bibliographystyle{plain}
\bibliography{refs}

\end{document}